\documentclass[oneside,11pt]{amsart}

%
%
\headsep 23pt
\footskip 35pt
\hoffset -4truemm
\voffset 12.5truemm
\newskip\stdskip                      
\stdskip=6.6pt plus3.3pt minus3.3pt    
\setlength{\textheight}{7.5in}          
\setlength{\textwidth}{5.2in}         
\flushbottom                           
\setlength{\parindent}{0pt}            
\setlength{\parskip}{\stdskip}
\setlength{\medskipamount}{\stdskip}
\setlength{\mathsurround}{0.8pt}     
\setlength{\labelsep}{0.75em} 

%
%

\usepackage{pinlabel}  
\usepackage{pictexwd,dcpic}
\usepackage{amscd,latexsym,amssymb}
\usepackage{bm}
  \usepackage{hyperref}  
  \hypersetup{%
  bookmarksnumbered=true,%
  bookmarks=true,%
  colorlinks=true,%
  linkcolor=blue,%
  citecolor=blue,%
  filecolor=blue,%
  menucolor=blue,%
  pagecolor=blue,%
  urlcolor=blue,%
  pdfnewwindow=true,%
  pdfstartview=FitBH}
%
%
%
   \title{On the $KO$-groups of toric manifolds}

%

\author[L. Cai]{Li Cai}
\address{Department of Mathematical Sciences, Xi'an Jiaotong-Liverpool University, 111 Ren'ai Road, Dushu Lake Higher Education Town, Suzhou 215123, Jiangsu, China}
\email{Li.Cai@xjtlu.edu.cn}

\author[S. Choi]{Suyoung Choi}
\thanks{The first named author was supported by National Natural Science Foundation of China (grant no. 11801457). The second named author was supported by the National Research Foundation of Korea Grant funded by the Korean Government (NRF-2019R1A2C2010989)}
\address{Department of Mathematics, Ajou University, 206 Worldcup-ro, Suwon 16499, South Korea}
\email{schoi@ajou.ac.kr}

\author[H.Park]{Hanchul Park}
\address{Department of Mathematics Education, Jeju National University, 102 Jejudaehak-ro, Jeju-si, Jeju-do 63243, Republic of Korea}
\email{hanchulp@jejunu.ac.kr}

\newtheorem{thm}{Theorem}[section]

\newtheorem{cor}[thm]{Corollary}
\newtheorem{lem}[thm]{Lemma}
\newtheorem{prop}[thm]{Proposition}

\theoremstyle{definition}

\theoremstyle{definition}

\newtheorem{exm}[thm]{Example}

\theoremstyle{remark}

\newtheorem*{ack}{Acknowledgment}

\def\co{\colon\thinspace}



\begin{document}
\begin{abstract}    
In this paper we consider the real topological $K$-groups of a toric manifold $M$, which turns out to be closely related to the topology of the small cover $M_\mathbb{R}$, the fixed points under the canonical conjugation on $M$. Following the work of Bahri and Bendersky \cite{BB00}, we give an explicit formula for the $KO$-groups of toric manifolds, and then we characterize the two extreme classes of toric manifolds according to the two $\mathcal{A}(1)$ modules shown in \cite{BB00}. 
\end{abstract}

\maketitle

\section{Introduction}
Following Davis and Januszkiewicz \cite{DJ91}, by a toric manifold we mean a smooth manifold with locally standard torus action, such that the orbit space is homeomorphic to a simple convex polytope (see \eqref{def:M} for details). Toric manifolds include all projective non-singular toric varieties, and they generate the complex cobordism ring (see \cite{BR98}). We refer the readers to Buchstaber and Panov \cite[Chapter 5]{BP02} for more details.

Let $c\co KO^*(M)\to K^*(M)$ and 
$r\co K^*(M)\to KO^*(M)$ be the complexification and realification maps 
respectively, and let the cokernel  
\[
   W^{i}(M)=KO^{2i}(M)/r
\]
of $r$ be the $i$-th \emph{Witt group}, 
which is clearly $4$-periodic. 
Since $r\circ c=2$, Witt groups consist of only $2$-torsion elements. 

Following \cite{KOtoric} and \cite{Zib15},
consider the Bott sequence 
\[
\begin{CD}
\cdots\rightarrow 
KO^{i+1}(M)@>\cdot\eta>>KO^i(M)@>c>>K^i(M)@>r\circ t^{-1}>>
KO^{i+2}(M)\rightarrow \cdots
	\end{CD}
\]
where $t\co K^0(M)\to K^{-2}(M)$ is the isomorphism given by multiplying the Bott element $t$, and $\eta\co KO^{0}(M)\to KO^{-1}(M)$ is the map given by multiplying the generator $\eta\in KO^{-1}(pt)$ from the Hopf bundle.

Since the integral cohomology $H^*(M;\mathbb{Z})$ concentrates in even dimensions, by Atiyah-Hirzebruch spectral sequence, the complex topological $K$-group $K^0(M)$ is free and $K^1(M)=0$. The Bott sequence above splits into the form
\[
\begin{CD}
0\rightarrow KO^{2i+1}(M)\stackrel{\eta}{\rightarrow}KO^{2i}(M)
\stackrel{c}{\rightarrow} K^{2i}(M)\stackrel{r\circ t^{-1}}{\rightarrow} KO^{2i+2}(M)\stackrel{\eta}{\rightarrow} KO^{2i+1}(M)\rightarrow 0,
\end{CD}
\]
we see that $\eta$ induces an isomorphism 
\begin{equation}
	\begin{CD}
		\eta\co W^{i+1}(M)@>\cong>>KO^{2i+1}(M).
        \end{CD}
	\label{eq:odd}
\end{equation}
Since the image of $KO^{2i}(M)$ under $c$ is a subgroup of the free 
abelian group $K^{2i}(M)$, it is also free and 
the sequence
\[
  \begin{CD}
  0@>>> KO^{2i+1}(M)@>\eta>>KO^{2i}(M)@>c>>\mathrm{im}c@>>>0	\end{CD}
\]
splits. By \eqref{eq:odd} we have that 
$\eta^2\co W^{i+1}(M)\to KO^{2i}(M)$ induces an isomorphism
 onto its image and 
\begin{equation}
    KO^{2i}(M)=W^{i+1}(M)\bigoplus\text{free part}.
    \label{eq:even}
\end{equation}
Next we determine the Witt group 
\[W^*(M)=\bigoplus_{i=0}^3W^i(M).\]

Let $\mathcal{A}(1)$ be the subalgebra of the $\mathrm{mod}$ $2$ Steenrod algebra $\mathcal{A}$ generated by $Sq^1$ and $Sq^2$. Bahri and Bendersky \cite{BB00} considered the $\mathrm{mod}$ $2$ cohomology $H^*(M;\mathbb{Z}_2)$ as $\mathcal{A}(1)$ module, which turns out to be a direct sum
\begin{equation}
       H^*(M;\mathbb{Z}_2)=\underline{\mathcal{S}}\oplus\underline{\mathcal{M}}, \label{eq:dec}
\end{equation}
where 
\[\underline{S}=\oplus_i\mathbb{Z}_2\{x_i\}\] 
is a trivial $\mathcal{A}(1)$ module generated by elements $x_i$ such that $Sq^1x_i=Sq^2x_i=0$, and 
\[\underline{\mathcal{M}}=\oplus_j\mathbb{Z}_2\{y_j,z_j\}\] 
is generated by pairs of elements $y_j,z_j$ such that $Sq^1y_j=Sq^1z_j=0$ and $Sq^2y_j=z_j$. 
By considering the Adams spectral sequence converging to $ko_*(M)$, with $E_2$ term
\[\mathrm{Ext}_{\mathcal{A}(1)}^{s,t}(
H^*(M;\mathbb{Z}_2),\mathbb{Z}_2)=
\mathrm{Ext}_{\mathcal{A}(1)}^{s,t}(\underline{\mathcal{S}},\mathbb{Z}_2)
\oplus\mathrm{Ext}_{\mathcal{A}(1)}^{s,t}(\underline{\mathcal{M}},\mathbb{Z}_2),\]
they showed that the Adams spectral sequence
collapses at $E_2$-page. 
Here is a conclusion of their results, in which the free part follows easily from the Atiyah-Hirzebruch spectral sequence with rational coefficients $KO^*(pt;\mathbb{Q})$. 
\begin{thm}[\cite{BB00}]\label{thm:BB}
	All torsion elements in $KO^*(M)$ have order $2$, and they come from the module $\underline{\mathcal{S}}$. More precisely each element $x\in\underline{\mathcal{S}}$ in dimension $n_x$ contributes two $2$-torsion elements in dimensions $n_x-1$ and $n_x-2$, respectively. 
	The free part of the group $KO^{2i}(M)$ has rank $n_{2i}$, where 
	\begin{align*}
	n_0&=n_4=\mathrm{dim}_{\mathbb{Q}}
	 \bigoplus_{k\in\mathbb{Z}}H^{4k}(M;\mathbb{Q})\\
	 n_2&=n_6=\mathrm{dim}_{\mathbb{Q}}
	 \bigoplus_{k\in\mathbb{Z}}H^{4k+2}(M;\mathbb{Q}).
	 	\end{align*}
	 	\end{thm}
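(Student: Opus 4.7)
The plan is to execute the Adams spectral sequence computation outlined in the paragraph preceding the statement: split the $E_2$-page along the $\mathcal{A}(1)$-module decomposition (\ref{eq:dec}), treat the two summands separately, check that the spectral sequence collapses at $E_2$, resolve extensions, and pin down the free ranks by comparison with the rational Atiyah-Hirzebruch spectral sequence.

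For the trivial summand $\underline{\mathcal{S}} = \bigoplus_i \Sigma^{n_{x_i}}\mathbb{Z}_2$, the contribution is simply a shift by $n_{x_i}$ of the well-known chart for $\mathrm{Ext}_{\mathcal{A}(1)}(\mathbb{Z}_2, \mathbb{Z}_2)$: an $h_0$-tower (yielding one free $\mathbb{Z}$-summand per $x_i$) together with two $h_1$-periodic ``lightning flash'' towers positioned one and two internal degrees above the generator (yielding two $\mathbb{Z}/2$-summands). After translating internal degree to cohomological degree of $M$, these converge to the pair of 2-torsion classes in $KO$-degrees $n_{x_i}-1$ and $n_{x_i}-2$ asserted by the theorem. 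For the pair summand $\underline{\mathcal{M}} = \bigoplus_j \mathbb{Z}_2\{y_j, z_j\}$, each $2$-dimensional $\mathcal{A}(1)$-submodule with $Sq^1 = 0$ and $Sq^2 y_j = z_j$ is a standard short $\mathcal{A}(1)$-module whose $\mathrm{Ext}_{\mathcal{A}(1)}$ is $h_1$-free, so that only $h_0$-towers survive and $\underline{\mathcal{M}}$ contributes exclusively to the free part of $ko_*(M)$.

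Collapse at $E_2$ then follows because the surviving $E_\infty$-ranks are forced by the rational answer: the Atiyah-Hirzebruch spectral sequence with coefficients $KO^*(\mathrm{pt};\mathbb{Q}) \cong \mathbb{Q}[u^{\pm 1}]$, $|u|=4$, collapses immediately since $H^*(M;\mathbb{Q})$ is even-concentrated, giving
\[
\mathrm{rk}\, KO^{2i}(M) = \dim_{\mathbb{Q}} \bigoplus_{k\in\mathbb{Z}} H^{2i+4k}(M;\mathbb{Q}),
\]
which reduces to $n_0 = n_4$ and $n_2 = n_6$ by the $4$-periodicity of $KO^*(\mathrm{pt};\mathbb{Q})$. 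This rank count leaves no room for an Adams $d_r$ to cancel any $v_1$-periodic class or any free class, so all differentials vanish. Hidden additive extensions between torsion and free parts are ruled out by the Bott-sequence splitting (\ref{eq:even}) and the $\eta$-isomorphism (\ref{eq:odd}), both established earlier in the excerpt independently of the Adams machinery.

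The main obstacle is the $\mathcal{A}(1)$-module-theoretic input — verifying that $\mathrm{Ext}_{\mathcal{A}(1)}$ of the pair module is indeed $h_1$-free, so that every $h_1$-torsion class in $ko_*(M)$ truly originates from $\underline{\mathcal{S}}$. Once this is settled, the remainder of the argument is a careful matching of $E_\infty$ with the asserted $KO$-groups, using the standard chart of $\mathrm{Ext}_{\mathcal{A}(1)}(\mathbb{Z}_2,\mathbb{Z}_2)$ for the $\underline{\mathcal{S}}$-contribution and the rational AHSS for the free-rank count.
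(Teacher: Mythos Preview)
The paper does not supply its own proof of this statement; it is quoted from Bahri--Bendersky \cite{BB00}, and the paragraph preceding the theorem records only that the result is obtained via the Adams spectral sequence for $ko_*(M)$ split along the decomposition \eqref{eq:dec} and collapsing at $E_2$, with the free ranks read off from the rational Atiyah--Hirzebruch spectral sequence. Your proposal reproduces precisely this outline, so there is nothing to compare beyond what the paper already summarises.

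One small caveat on your collapse argument: matching the free rank at $E_2$ with the rational rank of $KO^*(M)$ rules out any Adams differential that would truncate an $h_0$-tower, but it does not on its own exclude a differential whose source and target are both $h_1$-torsion classes contributed by $\underline{\mathcal{S}}$. The paper does not indicate how \cite{BB00} actually establishes collapse, so no comparison is possible on this point; just be aware that the rank-count sentence, as written, is not yet a complete justification.
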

	
	\subsection{Main Results}In order to determine the Witt group $W^*(M)$ completely, it remains to determine the submodule $\underline{\mathcal{S}}$. We will see that it is closely related to the cohomology of the small cover $M_{\mathbb{R}}$, which is a submanifold of $M$ being the fixed points of a special involution, the ``conjugation" on $M$ (see \eqref{def:MR} for details).
	 
Here is our trivial observation, which is a special case from the conjugation spaces in the sense of Hausmann, Holm and Puppe \cite{HHP05}:
\begin{lem}\label{lem:key}
	There is a canonical isomorphism 
	\[\begin{CD}
	\phi\co H^*(M;\mathbb{Z}_2)@>\cong>>H^*(M_{\mathbb{R}};\mathbb{Z}_2),
	\end{CD}\]
	of rings such that $\phi\circ Sq^2=Sq^1\circ \phi$, where the image of an element in dimension $2i$ is in dimension $i$, for $i=0,\ldots, n$.
\end{lem}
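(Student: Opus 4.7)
The plan is to use the Davis-Januszkiewicz presentations of the mod $2$ cohomology rings. If $P$ is the orbit polytope with facets $F_1,\ldots,F_m$ and $\lambda$ is the characteristic function, then both cohomology rings have the common presentation
\[
\mathbb{Z}_2[v_1,\ldots,v_m]\big/(I_{SR}+J_\lambda),
\]
with $|v_i|=2$ for $M$ and $|v_i|=1$ for $M_{\mathbb{R}}$. Here $I_{SR}$ is the Stanley-Reisner ideal of $P$ and $J_\lambda$ is the linear ideal read off from $\lambda$; the two ideals are combinatorially identical in the two settings. Assigning $v_i\mapsto v_i$ therefore defines a canonical ring isomorphism $\phi$ that sends degree $2i$ to degree $i$, as required.

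For the Steenrod compatibility, I would first verify $\phi\circ Sq^2=Sq^1\circ\phi$ on generators: on the $M$ side, $|v_i|=2$ forces $Sq^2 v_i=v_i^2$, and on the $M_{\mathbb{R}}$ side, $|v_i|=1$ forces $Sq^1 v_i=v_i^2$, so the two squares match under $\phi$. To promote this to every monomial I would invoke two facts. First, the classes $v_i\in H^2(M;\mathbb{Z}_2)$ are mod $2$ reductions of integral classes (Poincaré duals of the characteristic submanifolds), so $Sq^1 v_i=0$; consequently the Cartan formula for $Sq^2$ applied to a monomial $v_{i_1}\cdots v_{i_k}$ collapses to the derivation rule, yielding $\sum_j v_{i_1}\cdots v_{i_j}^2\cdots v_{i_k}$. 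Second, $Sq^1$ is always a derivation, being the Bockstein, and on the $M_{\mathbb{R}}$ side it sends $v_i\mapsto v_i^2$, so it produces exactly the same sum. Since the two derivations agree on algebra generators, they agree on the subring they generate, which is the whole cohomology.

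The main potential obstacle is the verification that the Davis-Januszkiewicz presentation transfers verbatim between $M$ and $M_{\mathbb{R}}$ (so that $\phi$ is well-defined on the quotient), but this is classical and requires only a careful bookkeeping of the characteristic data; the underlying observation is that the topological collapse defining $M$ over $P\times T^n$ and $M_{\mathbb{R}}$ over $P\times (\mathbb{Z}_2)^n$ uses the same combinatorial data modulo $2$. Alternatively, one can invoke the conjugation space formalism of Hausmann-Holm-Puppe \cite{HHP05} cited in the statement, which packages both the ring isomorphism and the Steenrod relation as a single instance of a general phenomenon; but the Davis-Januszkiewicz route is more elementary and keeps the paper self-contained.
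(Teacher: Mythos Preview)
Your proposal is correct and follows essentially the same route as the paper: define $\phi$ via the Davis--Januszkiewicz presentations by $[v_k]\mapsto x_k$ and verify $\phi\circ Sq^2=Sq^1\circ\phi$ on the degree-$2$ generators using $Sq^2[v_k]=[v_k]^2$ and $Sq^1x_k=x_k^2$. You are in fact more careful than the paper, which checks the identity only on generators and leaves implicit the extension to products; your Cartan-formula argument (using $Sq^1[v_k]=0$ so that $Sq^2$ becomes a derivation on $H^*(M;\mathbb{Z}_2)$, matching the derivation $Sq^1$ on the other side) is exactly what is needed to make that step rigorous.
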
	 
Therefore we only need to consider those $Sq^1$-cocycles in $H^*(M_{\mathbb{R}};\mathbb{Z}_2)$. In this direction we have a complete answer with the help of the previous works \cite{Tre12}, \cite{ST12}, \cite{CP13}, \cite{CC16} on the cohomology groups of $M_{\mathbb{R}}$. Let $\lambda_{2}$ be the $\mathrm{mod}$ $2$ reduction of the characteristic matrix $\lambda$ of $M$ (see \eqref{eq:lambda}), and let $\mathrm{Row}(\lambda_{2})$ be the subspace of $\mathbb{Z}_2^m$ spanned by the row vectors of $\lambda_{2}$. Let $K$ be the simplicial complex
dual to the boundary of $P$, and let $K_{\omega}$ be the full subcomplex with vertex set $\{i\mid w_i=1\}$ for $\omega=(w_i)_{i=1}^m\in \mathrm{Row}(\lambda_2)$ (see \eqref{def:KP}, \eqref{def:KS}
for details).
\begin{thm}\label{thm:main}
Let $M$ be the toric manifold over a simple convex polytope $P$ with characteristic matrix $\lambda$. Let $K$ be the simplicial complex dual to the boundary of $P$. Then for each mod $4$ integers $i\in\mathbb{Z}_4$, we have additive isomorphisms
\begin{equation}
    W^{i+1}(M)= \bigoplus_{k\in\mathbb{Z}}
\bigoplus_{\omega\in\mathrm{Row}(\lambda_2)}
    \widetilde{H}^{i+4k}(K_{\omega};\mathbb{Z}_2),\label{eq:Witt}
	\end{equation}
where $\widetilde{H}^{i}(K_{\emptyset};\mathbb{Z}_2)$ is non-trivial only when $i=-1$, which is isomorphic to $\mathbb{Z}_2$. 
\end{thm}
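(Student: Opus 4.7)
The plan is to convert the Witt-group calculation into a $Sq^1$-cohomology computation on the small cover $M_\mathbb{R}$, and then to extract the combinatorial description from the existing cohomological models of $M_\mathbb{R}$.

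First I would combine Theorem~\ref{thm:BB} with the Bott isomorphism~\eqref{eq:odd} and the $8$-periodicity of $KO^*$ to repackage each generator $x\in\underline{\mathcal{S}}$. An element $x$ in even dimension $n_x$ contributes, via \eqref{eq:even}, a single $\mathbb{Z}_2$ to the Witt group $W^{n_x/2}(M)$, because the two $2$-torsion classes in $KO^{n_x-1}(M)$ and $KO^{n_x-2}(M)$ are $\eta$-related and hence represent the same Witt class. Collecting contributions by their $\mathbb{Z}/4$-index yields
\[
W^{i+1}(M)\;\cong\;\bigoplus_{j\equiv i+1\,(\mathrm{mod}\,4)}\underline{\mathcal{S}}^{\,2j}.
\]

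Next I would identify $\underline{\mathcal{S}}^{2j}$ with a $Sq^1$-cohomology on $M_\mathbb{R}$. Since $H^*(M;\mathbb{Z}_2)$ vanishes in odd degrees, $Sq^1$ acts trivially on it and the $\mathcal{A}(1)$-decomposition~\eqref{eq:dec} is controlled entirely by $Sq^2$. Each $\{y_j,z_j\}$-pair in $\underline{\mathcal{M}}$ contributes one copy to both the kernel and the image of $Sq^2$, so $\underline{\mathcal{M}}$ is acyclic for $Sq^2$; consequently $\underline{\mathcal{S}}^{2j}$ agrees with the $Sq^2$-cohomology of $H^*(M;\mathbb{Z}_2)$ in degree $2j$. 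Lemma~\ref{lem:key} then halves the degree and transports $Sq^2$ to $Sq^1$, producing
\[
\underline{\mathcal{S}}^{\,2j}\;\cong\;H^{j}\!\bigl(H^*(M_\mathbb{R};\mathbb{Z}_2),\,Sq^1\bigr).
\]

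The heart of the proof, and its main obstacle, is the combinatorial identification
\[
H^{j}\!\bigl(H^*(M_\mathbb{R};\mathbb{Z}_2),\,Sq^1\bigr)\;\cong\;\bigoplus_{\omega\in\mathrm{Row}(\lambda_2)}\widetilde{H}^{\,j-1}(K_\omega;\mathbb{Z}_2).
\]
For this I would start from the Davis--Januszkiewicz presentation $H^*(M_\mathbb{R};\mathbb{Z}_2)=\mathbb{Z}_2[K]/J_\lambda$, on which $Sq^1$ is the Bockstein and therefore the derivation with $Sq^1v_i=v_i^2$, and use the cochain-level $\omega$-splitting of $H^*(M_\mathbb{R};\mathbb{Z}_2)$ developed in \cite{Tre12,ST12,CP13,CC16}. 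Tracing $Sq^1$ through that $\omega$-decomposition should reduce each $\omega$-summand to a chain complex whose cohomology is $\widetilde{H}^{\,*-1}(K_\omega;\mathbb{Z}_2)$. Finally, substituting $j=i+1+4k$ and summing over $k\in\mathbb{Z}$ in the chain of isomorphisms assembled above gives the theorem, the case $\omega=\emptyset$ being responsible for the boundary contribution $\widetilde{H}^{-1}(K_\emptyset;\mathbb{Z}_2)=\mathbb{Z}_2$ that appears precisely when $i\equiv 3\pmod 4$.
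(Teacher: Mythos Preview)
Your reduction to $Sq^1$-cohomology is correct and is in fact a cleaner way of phrasing what the paper does: the identification $\underline{\mathcal{S}}^{2j}\cong H^{j}\bigl(H^*(M_\mathbb{R};\mathbb{Z}_2),Sq^1\bigr)$ via Lemma~\ref{lem:key} is exactly the content of the paper's sentence ``an element as a $Sq^2$-cocycle in $\underline{\mathcal{S}}^{2i}$ corresponds bijectively to a $Sq^1$-cocycle in $H^i(M_{\mathbb{R}};\mathbb{Z}_2)$'', together with the observation that $\underline{\mathcal{M}}$ is $Sq^2$-acyclic.

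The gap is at the step you yourself flag as the main obstacle. There is \emph{no} $\omega$-splitting of $H^*(M_{\mathbb{R}};\mathbb{Z}_2)$ of the form $\bigoplus_{\omega}\widetilde{H}^{*-1}(K_\omega;\mathbb{Z}_2)$; the references \cite{Tre12,ST12,CP13,CC16} establish that decomposition only over $\mathbb{Q}$ or $\mathbb{Z}_p$ for $p$ odd, and over $\mathbb{Z}$ with the $2$-torsion shift recorded in Proposition~\ref{prop:CC}. Over $\mathbb{Z}_2$ the two sides do not even have the same dimension: for $M_\mathbb{R}=\mathbb{R}P^n$ one has $\dim H^*(\mathbb{R}P^n;\mathbb{Z}_2)=n+1$, while $\bigoplus_\omega\widetilde{H}^{*-1}(K_\omega;\mathbb{Z}_2)$ has total dimension $1$ or $2$ according to the parity of $n$. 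So you cannot ``trace $Sq^1$ through the $\omega$-decomposition'': no such decomposition is available on the object whose $Sq^1$-cohomology you need.

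What the paper does instead is stay on the integral side. It reads off the $Sq^1$-cohomology of $H^*(M_{\mathbb{R}};\mathbb{Z}_2)$ in degree $i$ from $H^*(M_{\mathbb{R}};\mathbb{Z})$ via the first Bockstein: a class survives iff it comes from a free generator or from $2^{k}$-torsion with $k\ge 2$ in degree $i$ or $i+1$. Proposition~\ref{prop:CC} (the torsion-shift comparison from \cite{CC16}) then matches these three types of contributions bijectively with free generators and $2^{k}$-torsion ($k\ge 1$) in $\bigoplus_\omega\widetilde{H}^{*}(K_\omega;\mathbb{Z})$ in degrees $i-1$ and $i$, which is exactly a basis of $\bigoplus_\omega\widetilde{H}^{i-1}(K_\omega;\mathbb{Z}_2)$. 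Your argument can be completed by inserting this integral step in place of the nonexistent $\mathbb{Z}_2$-splitting; alternatively, one could try to compute the $Sq^1$-cohomology of $\mathbb{Z}_2[K]/J_\lambda$ directly by algebraic means, but that is a separate and nontrivial calculation not covered by the cited references.
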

Combining the results together, for $i\in\mathbb{Z}_4$, 
we have the following additive isomorphisms
\begin{align}
KO^{2i}(M)&=
\bigoplus_{k\in\mathbb{Z}}\left[
H^{4k+1-(-1)^i}(M;\mathbb{Z})\bigoplus
 \bigoplus_{\omega\in\mathrm{Row}(\lambda_2)}
\widetilde{H}^{i+4k}(K_{\omega};\mathbb{Z}_2)\right],\\
KO^{2i+1}(M)&= \bigoplus_{k\in\mathbb{Z}}\bigoplus_{\omega\in\mathrm{Row}(\lambda_2)}
\widetilde{H}^{i+4k}(K_{\omega};\mathbb{Z}_2). 
	\end{align}
A geometric construction of bundles realizing part of these elements is given in \cite{CR05}.
	
Next we consider two extreme cases in \eqref{eq:dec}. We say that a toric manifold $M$ is of $\mathcal{S}$-type if $H^*(M;\mathbb{Z}_2)=\underline{\mathcal{S}}$ and it is of $\mathcal{M}$-type if the reduced cohomology $\widetilde{H}^*(M;\mathbb{Z}_2)=\underline{\mathcal{M}}$. For the first case we have the following result, in which 
(S4) $\Rightarrow$ (S3) is proved in \cite{DJ91}.
\begin{thm}\label{thm:S}
Let $M$ be a toric manifold with characteristic matrix $\lambda$. 
The following are equivalent:
\begin{enumerate}
\item[(S1)] $M$ is of $\mathcal{S}$-type;
\item[(S2)] the corresponding small cover $M_{\mathbb{R}}$ 
      has no $2$-torsion elements in its integral cohomology 
      $H^*(M_{\mathbb{R}};\mathbb{Z})$; 
\item[(S3)] $H^*(M_{\mathbb{R}};\mathbb{Z})$ is torsion free;
\item[(S4)] (up to a basis change) the mod $2$ reduction of $\lambda$ has only one non-zero entry in each column (i.e., $M_{\mathbb{R}}$ is a pull-back of the linear model in the sense of \cite{DJ91}).
 \end{enumerate}
\end{thm}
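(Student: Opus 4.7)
The plan is to establish Theorem \ref{thm:S} via the cyclic chain (S4) $\Rightarrow$ (S3) $\Rightarrow$ (S2) $\Rightarrow$ (S1) $\Rightarrow$ (S4). The implication (S4) $\Rightarrow$ (S3) is given in \cite{DJ91}, and (S3) $\Rightarrow$ (S2) is immediate since torsion-freeness is strictly stronger than the absence of $2$-torsion.

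For (S2) $\Rightarrow$ (S1), I will combine Lemma \ref{lem:key} with the Bockstein spectral sequence. If $H^*(M_{\mathbb{R}};\mathbb{Z})$ has no $2$-torsion, it in particular has no $\mathbb{Z}/2$-summand, so the Bockstein $Sq^1$ vanishes on $H^*(M_{\mathbb{R}};\mathbb{Z}_2)$. Lemma \ref{lem:key} supplies a ring isomorphism $\phi$ intertwining $Sq^2$ on $H^*(M;\mathbb{Z}_2)$ with $Sq^1$ on $H^*(M_{\mathbb{R}};\mathbb{Z}_2)$, hence $Sq^2 \equiv 0$ on $H^*(M;\mathbb{Z}_2)$. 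Because the $\mathcal{A}(1)$-module $\underline{\mathcal{M}}$ in the Bahri-Bendersky decomposition is characterized by carrying non-trivial $Sq^2$-actions on its generators, this forces $\underline{\mathcal{M}} = 0$, which is (S1).

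The principal task, and the main obstacle, is (S1) $\Rightarrow$ (S4). Each facet class $v_i$ lies in degree $2$ and is the mod-$2$ reduction of an integral class, so $Sq^1 v_i = 0$; by the Cartan formula, $Sq^2 \equiv 0$ on $H^*(M;\mathbb{Z}_2)$ is equivalent to $v_i^2 = 0$ for all $i$. Using the Davis-Januszkiewicz relation $\mu_k v_i = (\lambda_2)_{ki}\,v_i^2 + \sum_{j \neq i}(\lambda_2)_{kj}\,v_j v_i = 0$ (where $\mu_k = \sum_j (\lambda_2)_{kj} v_j$), the condition $v_i^2 = 0$ reformulates as the linear-algebraic statement: for every facet $F_i$, the $i$-th column of $\lambda_2$ does not lie in the $\mathbb{Z}_2$-span of the columns indexed by $N_i := \{j : F_j \cap F_i \neq \emptyset\}$; equivalently, $\mathrm{Row}(\lambda_2)$ contains, for each $i$, a vector $\omega$ with $\omega_i = 1$ and $\omega_j = 0$ for every $j \in N_i$.

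The remaining combinatorial step is to promote this pointwise family of non-inclusion conditions into a single global coloring of the facets realizing (S4). I plan to work vertex-by-vertex: at each vertex $V \in P$ with adjacent facets $F_{i_1},\ldots,F_{i_n}$, the basis of $\mathbb{Z}_2^n$ can be chosen so the columns $(\lambda_2)_{\cdot,i_l}$ become the standard vectors $e_l$; the non-inclusion condition applied at $F_{i_l}$ then forces the $l$-th coordinate of any other column $(\lambda_2)_{\cdot,j}$ to vanish whenever $F_j \cap F_{i_l} \neq \emptyset$. Patching these local bases along the edges of $P$ (where two incident vertices share $n-1$ common facets, yielding tightly constrained basis transitions) should produce a single global basis of $\mathbb{Z}_2^n$ in which every column of $\lambda_2$ is a standard basis vector, which is (S4). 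The delicate point is the global consistency of these local choices across the edge graph of $P$; I expect this will follow from connectivity of the $1$-skeleton of $P$ together with the non-inclusion conditions holding simultaneously at both endpoints of each edge.
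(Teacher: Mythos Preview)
Your chain of implications and the arguments for (S4) $\Rightarrow$ (S3) $\Rightarrow$ (S2) $\Rightarrow$ (S1) match the paper's treatment. The divergence is in (S1) $\Rightarrow$ (S4), where you take a genuinely different route from the paper.

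\textbf{Comparison of the two approaches.} The paper proceeds by induction on $n$: it builds a handlebody decomposition of $M_{\mathbb{R}}$ adapted to a facet $F$ (Propositions~3.1--3.4), shows via a cell-structure argument (Lemma~3.1, Proposition~3.2) that the absence of $2$-torsion in $H^*(M_{\mathbb{R}};\mathbb{Z})$ forces the same for the facet small cover $\pi^{-1}(F)$, applies the inductive hypothesis there, and then propagates the resulting standard form of $\lambda_2$ along edges. Your approach is more direct and algebraic: reduce (S1) to $v_i^2=0$ for all $i$, reformulate this column-by-column as a condition on $\mathrm{Row}(\lambda_2)$, and patch along edges. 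If carried through, your argument avoids the handlebody machinery and the dimension induction entirely, which is an attractive simplification; the final edge-patching step is essentially the same idea in both proofs.

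\textbf{The gap.} The step that needs justification is the ``only if'' direction of your reformulation: from $v_i^2=0$ in $H^*(M;\mathbb{Z}_2)$ you want an $\omega\in\mathrm{Row}(\lambda_2)$ with $\omega_i=1$ and $\omega_j=0$ for every $j$ with $F_j\cap F_i\neq\emptyset$, $j\neq i$. Multiplying the linear relations by $v_i$ gives $n$ relations among the monomials $v_i^2$ and $\{v_iv_j:j\in N_i\}$, but you have not shown these are the \emph{only} relations among those monomials in $H^4(M;\mathbb{Z}_2)$; a priori there could be further relations coming from $\mu_k v_l$ with $l\neq i$, allowing $v_i^2$ to vanish without any such $\omega$ existing. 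One clean way to close this: let $\iota\colon M_i\hookrightarrow M$ be the characteristic submanifold over $F_i$. Then $\iota_*\colon H^2(M_i;\mathbb{Z}_2)\to H^4(M;\mathbb{Z}_2)$ is injective (Poincar\'e-dual to the surjection $\iota^*$), and $v_i^2=\iota_*(\iota^* v_i)$, so $v_i^2=0$ iff $\iota^* v_i=0$ in $H^2(M_i;\mathbb{Z}_2)$. Computing $\iota^* v_i$ against the linear relations of $M_i$ (whose row space is $\{\omega|_{N_i}:\omega\in\mathrm{Row}(\lambda_2),\ \omega_i=0\}$) then yields exactly your $\omega$. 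Once this is supplied, your edge-patching argument is sound: in the basis at a vertex $V$, the condition forces each neighboring column to be a single standard vector, so the same basis works at the adjacent vertex, and connectivity of the edge graph finishes the proof.
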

There are many interesting examples in this case, 
including the isospectral manifolds of tridiagonal hermitian matrices (see \cite{BFR89}, \cite{Dav87}); the class of torsion free small covers includes Tomei manifolds \cite{Tom84}, the isospectral manifolds of tridiagonal real symmetric matrices.

For the second case, we make a conclusion here, where the 
equivalence of (M1) and (M2) is proved in \cite{BB00}.  
\begin{thm}\label{thm:M}
The following are equivalent:
\begin{enumerate}
\item[(M1)] $M$ is of $\mathcal{M}$-type;
\item[(M2)] the reduced group $\widetilde{KO}^*(M)$ is torsion free;
\item[(M3)] $\widetilde{H}^*(M_{\mathbb{R}};\mathbb{Z})$ has only 
$2$-torsion elements.
\item[(M4)] $\widetilde{H}^*(K_{\omega};\mathbb{Z}_2)=0$, for all non-zero
      $\omega\in \mathrm{Row}(\lambda_{2})$.
	\end{enumerate}
\end{thm}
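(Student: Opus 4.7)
The plan is to show the four conditions are equivalent via three routes, leveraging the earlier results of the paper.

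The equivalence (M1) $\Leftrightarrow$ (M2) is immediate from Theorem \ref{thm:BB}: that result identifies the torsion of $KO^*(M)$ with two $\mathbb{Z}_2$-copies per generator of $\underline{\mathcal{S}}$, so after passing to reduced theory, $\widetilde{KO}^*(M)$ is torsion-free if and only if the reduced $\underline{\mathcal{S}}$-summand is zero, which is the definition of $\mathcal{M}$-type.

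For (M2) $\Leftrightarrow$ (M4), I apply Theorem \ref{thm:main}. The summand of \eqref{eq:Witt} indexed by $\omega = 0$ uses only $\widetilde{H}^{-1}(K_\emptyset;\mathbb{Z}_2) = \mathbb{Z}_2$ and yields precisely $W^*(pt)$, i.e., the torsion in $KO^*(M)$ coming from the constant map $M \to pt$. Combining with \eqref{eq:odd} and \eqref{eq:even}, every torsion summand of $\widetilde{KO}^*(M)$ then comes from the $\omega \neq 0$ terms of \eqref{eq:Witt}, so (M2) holds iff $\widetilde{H}^*(K_\omega;\mathbb{Z}_2) = 0$ for every non-zero $\omega \in \mathrm{Row}(\lambda_2)$.

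Finally, for (M1) $\Leftrightarrow$ (M3) I transport the $\mathcal{A}(1)$-analysis to the small cover via Lemma \ref{lem:key}. Since $H^*(M;\mathbb{Z}_2)$ is concentrated in even degrees, $Sq^1$ acts trivially on it, so the full $\mathcal{A}(1)$-structure is carried by $Sq^2$ and the reduced $\underline{\mathcal{S}}$-summand coincides with $\ker Sq^2 / \mathrm{im}\,Sq^2$. Under $\phi$ this transports to the $Sq^1$-cohomology of $\widetilde{H}^*(M_\mathbb{R};\mathbb{Z}_2)$, which is the $E_2$-page of the mod-$2$ Bockstein spectral sequence of $M_\mathbb{R}$. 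Invoking the known structural fact (see \cite{ST12, CP13}) that $H^*(M_\mathbb{R};\mathbb{Z})$ contains only $2$-torsion, this spectral sequence collapses at $E_2$ and $E_2 = E_\infty$ is isomorphic to $\bigl(\text{free part of }H^*(M_\mathbb{R};\mathbb{Z})\bigr) \otimes \mathbb{Z}_2$. Thus the reduced $\underline{\mathcal{S}}$-part vanishes iff $\widetilde{H}^*(M_\mathbb{R};\mathbb{Z})$ has no free summand, equivalently iff (M3) holds. I expect the main obstacle here to be justifying this Bockstein collapse, which relies on the a priori structural input that small covers have only $2$-torsion in integral cohomology.
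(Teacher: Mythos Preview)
Your routes (M1)$\Leftrightarrow$(M2) via Theorem~\ref{thm:BB} and (M2)$\Leftrightarrow$(M4) via Theorem~\ref{thm:main} are exactly what the paper does, including the bookkeeping that the $\omega=0$ summand accounts for the base-point contribution.

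The argument for (M1)$\Leftrightarrow$(M3) has a genuine gap, precisely at the step you flagged. The structural fact available from \cite{ST12,CP13} is that the torsion of $H^*(M_{\mathbb{R}};\mathbb{Z})$ is $2$-\emph{primary}, not that every torsion element has order~$2$; indeed Proposition~\ref{prop:CC} of this paper explicitly records primitive $2^{j+1}$-torsion in $H^*(M_{\mathbb{R}};\mathbb{Z})$ for arbitrary $j\ge 1$. Hence the Bockstein spectral sequence does \emph{not} collapse at $E_2$ in general, and your chain ``$\underline{\mathcal{S}}=0\Leftrightarrow E_2=0\Leftrightarrow E_\infty=0\Leftrightarrow$ no free summand $\Leftrightarrow$ (M3)'' breaks. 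With the paper's reading of (M3) (every reduced class annihilated by~$2$), ``no free summand'' is strictly weaker than (M3); and if instead you read (M3) as ``only $2$-primary torsion'', then $E_\infty=0$ no longer forces $E_2=0$ (a $\mathbb{Z}_4$ summand gives $E_\infty=0$ but $E_2\neq 0$).

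The repair is to drop the collapse claim entirely. One has the elementary Bockstein fact that the $Sq^1$-cohomology of $\widetilde H^*(X;\mathbb{Z}_2)$ vanishes if and only if the $2$-local part of $\widetilde H^*(X;\mathbb{Z})$ is a direct sum of copies of $\mathbb{Z}_2$: a free summand or a $\mathbb{Z}_{2^k}$ summand with $k\ge 2$ contributes a nonzero $E_2$-class, while each $\mathbb{Z}_2$ summand is killed already by $d_1=Sq^1$. Combined with the genuine structural input that $M_{\mathbb{R}}$ has no odd torsion, this gives (M1)$\Leftrightarrow$(M3) directly --- which is exactly the paper's one-line argument that trivial $Sq^1$-cohomology ``is (M3)''.
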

For a toric manifold $M$ of $\mathcal{M}$-type, the ring structure $KO^*(M)$ is given explicitly in \cite[Theorem 6.5]{KOtoric}. It is observed in \cite[Proposition 5.1]{Par17} that if the property (M3) holds, then the dimension of $M_{\mathbb{R}}$ must be even. 

Finally, we consider the problem that how to construct toric manifolds of $\mathcal{M}$-type. Nishimura \cite{Nis08} gives some interesting examples, and a general idea to construct such kind of manifolds is given in \cite{KOtoric}, using the wedge construction given in \cite{BBCGwedge}. Following their idea, we have the result below (see Section \ref{sec:M} for details).
\begin{thm}\label{thm:J}
Let $M$ be a toric manifold over a simple convex 
polytope $P$ of dimension $n$, and let $K$ be the 
simplicial complex dual to the 
boundary of $P$. If vertices $v_1,\ldots,v_n$ of 
$K$ span a (maximal) simplex in $K$, then
the toric manifold $M(v_1,\ldots,v_n)$ obtained 
from the simplicial wedge construction is of 
$\mathcal{M}$-type.
	\end{thm}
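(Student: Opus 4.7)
The plan is to verify criterion (M4) of Theorem~\ref{thm:M}. After a basis change in $\mathbb{Z}^n$ we may assume $\lambda(v_i)=e_i$ for $i=1,\ldots,n$, since $\lambda(v_1),\ldots,\lambda(v_n)$ form a $\mathbb{Z}$-basis. Write $\sigma=\{v_1,\ldots,v_n\}$ and let $v_i^{(1)},v_i^{(2)}$ denote the two vertices replacing $v_i$ in $K':=K(v_1,\ldots,v_n)$. Under the canonical extension $\lambda'$, elements of $\mathrm{Row}(\lambda'_2)$ are parameterized by pairs $(c,d)\in\mathbb{F}_2^n\oplus\mathbb{F}_2^n$: the corresponding $\omega'$ restricts on $V-\sigma$ to the row $r:=\sum_i c_i(\lambda_2)_i\in\mathrm{Row}(\lambda_2)$ and has $\omega'|_{v_i^{(1)}}=c_i+d_i$, $\omega'|_{v_i^{(2)}}=d_i$. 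Setting $I':=\{i:v_i^{(1)}\in\omega'\}$ and $J':=\{i:v_i^{(2)}\in\omega'\}$, non-vanishing of $\omega'$ is equivalent to $I'\cup J'\neq\emptyset$.

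The second ingredient is the following ``un-wedging'' description of simplices of the iterated wedge: a subset $\tau'=\eta\cup\bigcup_i T_i$ with $\eta\subseteq V-\sigma$ and $T_i\subseteq\{v_i^{(1)},v_i^{(2)}\}$ belongs to $K'$ if and only if $\eta\cup\{v_i:|T_i|=2\}\in K$. Using this, I will analyze $K'_{\omega'}$ according to whether $I'\triangle J'$ is empty.

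If $I'\triangle J'\neq\emptyset$, pick $i_0\in I'-J'$ (the other case is symmetric). Since $v_{i_0}^{(2)}\notin\omega'$, every simplex of $K'_{\omega'}$ has $T_{i_0}\subseteq\{v_{i_0}^{(1)}\}$; appending $v_{i_0}^{(1)}$ to any simplex leaves $|T_{i_0}|\le 1$ and hence preserves the constraint $\eta\cup\{v_j:|T_j|=2\}\in K$. So $v_{i_0}^{(1)}$ is a cone vertex of $K'_{\omega'}$, which makes the complex contractible.

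If $I'=J'\neq\emptyset$, the identity $c_i=\omega'|_{v_i^{(1)}}+\omega'|_{v_i^{(2)}}=0$ over $\mathbb{F}_2$ forces $c=0$ and $r=0$, so $\omega'=\{v_i^{(1)},v_i^{(2)}:i\in I'\}$. Here the hypothesis enters decisively: since $\sigma_{I'}:=\{v_i:i\in I'\}$ is a face of the maximal simplex $\sigma\in K$, the condition $\{v_i:|T_i|=2\}\in K$ holds automatically, so each $T_i$ ran
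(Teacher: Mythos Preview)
Your argument is correct and follows the same route as the paper: both verify (M4) by showing each $K'_{\omega'}$ with $\omega'\neq 0$ is contractible, splitting into the case where some pair $\{v_i^{(1)},v_i^{(2)}\}$ contributes exactly one vertex to $\omega'$ (giving a cone) and the case where every such pair contributes both or neither (which forces $c=0$, so $K'_{\omega'}$ is the full simplex on $\{v_i^{(1)},v_i^{(2)}:i\in I'\}$). Your write-up makes the ``un-wedging'' description of simplices in $K'$ more explicit than the paper does, and your final sentence is truncated, but the intended conclusion is clear.
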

The paper is organized as follows. The calculation of the Witt group $W^*(M)$ is done in Section \ref{sec:KO}, where Lemma \ref{lem:key} and Theorem \ref{thm:main} are proved. The characterizations of $\mathcal{S}$-type and $\mathcal{M}$-type toric manifolds (i.e., Theorem \ref{thm:S} and Theorem \ref{thm:M}) are proved in Section \ref{sec:S} and Section \ref{sec:M} respectively. Theorem \ref{thm:J} is also proved in Section \ref{sec:M}.

\section{The cohomology of small covers}\label{sec:KO}
Let $P$ be a simple convex polytope of dimension $n$ and let $\mathcal{F}=\{F_1,\ldots, F_m\}$ be the set of facets (i.e., codimension one faces) of $P$. Since $P$ is simple, each face $F\subset P$ of codimension $l$ is an intersection of exactly $l$ facets. Suppose 
      \[\lambda\co \mathcal{F}\to \mathbb{Z}^n\] 
is a \emph{characteristic function} in the sense that whenever $F_{i_1},\ldots, F_{i_n}$ intersects at a vertex, their images $\lambda(F_{i_1}),\ldots, \lambda(F_{i_n})$ span the integer lattice $\mathbb{Z}^n$ of rank $n$. Occasionally we treat $\lambda$ as an integral matrix of size $n\times m$ in the form 
      \begin{equation}
      \lambda=\begin{pmatrix}
      a_{11} &a_{12} & \cdots & a_{1m}\\
      a_{21} &a_{21} & \cdots  &a_{2m}\\
      \cdots &\cdots & \cdots & \cdots\\
      a_{n1} &a_{n2} & \cdots & a_{nm} 
      \end{pmatrix},\label{eq:lambda}
\end{equation}
and call it a \emph{characteristic matrix}.

Let $T^n=S^1\times\ldots\times S^1$ be the $n$-dimensional compact torus whose Lie algebra is generated by $e_1,\ldots, e_n$, an additive basis of the lattice $\mathbb{Z}^n$. For a subset $I\subset [m]=\{1,2,\ldots,m\}$, let $T_I\subset T^n$ be the sub-torus with its Lie algebra generated by $\{\lambda({F}_i)\}_{i\in I}$. Then the corresponding toric manifold $M$ is given by 
      \begin{equation}
            M=T^n\times P/\sim \label{def:M}
      \end{equation}
where $(t,p)\sim (t',p')$ if and only if $p=p'$ and $t't^{-1}\in T_I$ where $I=\{i\in [m]\mid p\in F_i\}$ (if $p$ is in the interior of $P$, then $I=\emptyset$ with $T_{\emptyset}$ the identity of $T^n$). 
      
Following \cite{DJ91}, let $\tau\co M\to M$ be the involution given by $\tau([t,p])=[t^{-1},p]$. The fixed points of $\tau$ is a submanifold of dimension $n$, called the small cover associated to $M$ and is denoted by $M_{\mathbb{R}}$ (if $M$ is a projective non-singular toric variety, then $\tau$ can be realized as the complex conjugation).
      
Recall that the simplicial complex dual to the boundary of $P$ is given by       
      \begin{equation}
      K_P=\{\{i_1,i_2,\ldots,i_l\}\subset [m]
      \mid
      F_{i_1}\cap F_{i_2}\cap\cdots \cap F_{i_l}
      \not=\emptyset \}. \label{def:KP}
     \end{equation}
   \begin{thm}[\cite{DJ91}]
Let $M$ be a toric manifold and $M_{\mathbb{R}}$ be the corresponding small cover. Then we have the isomorphism 
\[H^*(M;\mathbb{Z})=
  \mathbb{Z}[v_1,\ldots,v_m]/\left(I+J\right)\]
of graded rings, where the degree of each generator $v_k$ is $2$, $I$ is the Stanley-Reisner ideal generated by square-free monomials $v_{i_1}v_{i_2}\cdots v_{i_s}$ such that 
$i_1,\ldots,i_s$ does not span a simplex in $K_P$, 
and $J$ is generated by $n$ linear elements 
      	\[ \sum_{k=1}^m a_{ik}v_{k},\]
where $(a_{ik})_{k=1}^m$ is the $i$-th row of $\lambda$, $i=1,\ldots, n$.
      	       	
Likewise, we have the isomorphism
\[
  H^*(M_{\mathbb{R}};\mathbb{Z}_2)=
   \mathbb{Z}_2[x_1,\ldots,x_m]/
  \left(I_{\mathbb{R}}+J_{\mathbb{R}}\right)
 \]
 of graded rings, where the degree of each generator $x_k$ is $1$, $I_{\mathbb{R}}$ and $J_{\mathbb{R}}$ are defined as above by replacing $v_i$ with $x_i$, respectively.  
\end{thm}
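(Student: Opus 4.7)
The plan is to follow the original Davis--Januszkiewicz strategy via equivariant cohomology and the Borel construction. I give full detail for the integral toric case; the $\mathbb{Z}_2$ small-cover statement is formally identical with $T^n$ replaced by $(\mathbb{Z}/2)^n$, since $H^*(B(\mathbb{Z}/2)^n;\mathbb{Z}_2)$ is a polynomial ring on degree-one generators exactly parallel to the degree-two integral polynomial ring $H^*(BT^n;\mathbb{Z})$.

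First, I would show that $M$ admits a CW structure with only even-dimensional cells: a generic linear functional on $P$ produces descending faces $D_p$ at each vertex $p$, whose preimages $\pi^{-1}(D_p)\subset M$ under the orbit projection $\pi\co M\to P$ are closed cells of even real dimension. Consequently $H^*(M;\mathbb{Z})$ is free and concentrated in even degrees. Passing to the Borel construction $M_{T^n}=ET^n\times_{T^n}M$, the Serre spectral sequence of the fibration $M\to M_{T^n}\to BT^n$ collapses for parity reasons, $H^*_{T^n}(M;\mathbb{Z})$ is a free $H^*(BT^n;\mathbb{Z})$-module, and
\begin{equation*}
H^*(M;\mathbb{Z})\cong H^*_{T^n}(M;\mathbb{Z})\otimes_{H^*(BT^n;\mathbb{Z})}\mathbb{Z}.
\end{equation*}
It therefore suffices to identify the equivariant cohomology as a ring and the image of $H^*(BT^n;\mathbb{Z})$ inside it.

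Next, I would identify the equivariant cohomology with the Stanley--Reisner ring. Each facet $F_i$ yields an invariant codimension-two submanifold $M_i:=\pi^{-1}(F_i)$ with an equivariant Thom class $v_i\in H^2_{T^n}(M;\mathbb{Z})$. If $\{i_1,\ldots,i_s\}$ does not span a simplex of $K_P$, then $M_{i_1}\cap\cdots\cap M_{i_s}=\emptyset$, so $v_{i_1}\cdots v_{i_s}=0$ and one has a surjection $\mathbb{Z}[v_1,\ldots,v_m]/I\twoheadrightarrow H^*_{T^n}(M;\mathbb{Z})$. Injectivity I would establish by induction on the face poset of $P$: compute $H^*_{T^n}(\pi^{-1}(Q);\mathbb{Z})$ for each face $Q$ via a Mayer--Vietoris argument from its proper sub-faces, with base case $Q$ a vertex, where the preimage is a single fixed point and the equivariant cohomology equals $H^*(BT^n;\mathbb{Z})$. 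A check that ranks agree (via the $h$-vector of $K_P$, which counts vertices of $P$ ordered by the generic linear functional above) upgrades the surjection to an isomorphism.

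Finally, I would identify the subring coming from the base. At each fixed point $x=\pi^{-1}(p)$ corresponding to a vertex $p=F_{i_1}\cap\cdots\cap F_{i_n}$, the tangent $T^n$-representation splits with weights $\lambda(F_{i_1}),\ldots,\lambda(F_{i_n})$, so the restriction $H^2(BT^n;\mathbb{Z})\to H^2_{T^n}(x;\mathbb{Z})$ is dual to this weight basis. Assembling the localization data at all fixed points yields $u_i\mapsto\sum_k a_{ik}v_k$, where $u_1,\ldots,u_n$ are the standard generators of $H^*(BT^n;\mathbb{Z})$; this identifies the ideal $J$, and quotienting gives the desired presentation. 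The main obstacle will be the integral face-poset induction identifying $H^*_{T^n}(M;\mathbb{Z})$ with $\mathbb{Z}[v_1,\ldots,v_m]/I$: one must ensure no hidden torsion appears at any stage, which is ultimately what forces the prior use of the even-cells argument to rule out odd-degree contributions in the equivariant Mayer--Vietoris sequences.
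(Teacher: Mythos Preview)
The paper does not prove this theorem: it is quoted from \cite{DJ91} as background and immediately used to prove Lemma~\ref{lem:key}. So there is no ``paper's own proof'' to compare against; your proposal is a sketch of the original Davis--Januszkiewicz argument, and it is essentially correct.

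One efficiency remark on your outline: once you have established (i) that $H^*_{T^n}(M;\mathbb{Z})$ is free over $H^*(BT^n;\mathbb{Z})$ via the even-cells/collapse argument, and (ii) a graded surjection $\mathbb{Z}[v_1,\ldots,v_m]/I\twoheadrightarrow H^*_{T^n}(M;\mathbb{Z})$, the $h$-vector rank comparison you mention already forces this surjection of free $\mathbb{Z}$-modules to be an isomorphism degree by degree. The face-poset Mayer--Vietoris induction you describe is therefore unnecessary for the present statement (though it is how one computes the equivariant cohomology of $\pi^{-1}(Q)$ for \emph{arbitrary} faces $Q$, which is a stronger result). Your stated ``main obstacle''---hidden torsion in the induction---thus does not actually arise in the minimal proof.
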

\begin{proof}[Proof of Lemma \ref{lem:key}]
Let $\phi\co H^*(M;\mathbb{Z}_2)\to 
H^*(M_{\mathbb{R}};\mathbb{Z}_2)$ be the map sending each mod $2$ reduction $[v_k]$ of $v_k$ to $x_k$, $k=1,\ldots,m$. Since 
\[
     \phi(Sq^2([v_k]))=\phi([v_k]^2)=x_k^2=Sq^1x_k,
     \] 
we have $\phi\circ Sq^2=Sq^1\circ \phi$ as desired. 
\end{proof}
First note that since $H^*(M;\mathbb{Z}_2)$ is a trivial $Sq^1$-module, a pair of generators in $\underline{\mathcal{M}}$ connected by $Sq^2$ corresponds bijectively to a pair of elements in $H^*(M_{\mathbb{R}};\mathbb{Z}_2)$ connected by $Sq^1$, namely a $\mathbb{Z}_2$ summand in $H^*(M_{\mathbb{R}};\mathbb{Z})$. 

In what follows, a non-trivial element in an abelian group is \emph{primitive} if it is not divisible by an integer greater than $1$.
 To complete the proof of Theorem \ref{thm:main}, we need the following result proved in \cite[Corollary 5.3]{CC16}.
\begin{prop}\label{prop:CC}
Let $G_1^i$ be the group 
$\bigoplus_{\omega\in\mathrm{Row}(\lambda_2)}
\widetilde{H}^{i}(K_{\omega};\mathbb{Z})$ 
and let $G_2^i$ be $H^{i+1}(M_{\mathbb{R}};\mathbb{Z})$, 
in each dimension $i\geq -1$.
We have a correspondence between them, such that
\begin{enumerate}
	\item the free part of $G_1^i$ is isomorphic to that of $G_2^i$, and 
\item primitive $2^j$-torsion elements in $G_1^i$ correspond bijectively to primitive $2^{j+1}$-torsion elements in $G_2^i$, for all $j\geq 1$.
\end{enumerate} 	
\end{prop}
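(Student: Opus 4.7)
The plan is to prove Proposition \ref{prop:CC} by running the mod $2$ Bockstein spectral sequence on both sides of the claimed correspondence and matching differentials. The natural starting point is the additive decomposition
\[
H^p(M_{\mathbb{R}}; \mathbb{Z}_2) \cong \bigoplus_{\omega \in \mathrm{Row}(\lambda_2)} \widetilde{H}^{p-1}(K_{\omega}; \mathbb{Z}_2),
\]
proved in \cite{Tre12}, \cite{ST12}, \cite{CP13}, which, after a degree shift by one, identifies the $E_1$-page of the Bockstein spectral sequence computing $H^*(M_{\mathbb{R}}; \mathbb{Z})$ with the direct sum of the $E_1$-pages of the Bockstein spectral sequences computing $\widetilde{H}^*(K_{\omega}; \mathbb{Z})$. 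The free-rank statement in Proposition \ref{prop:CC}(1) then follows immediately, since a mod $2$ class is a permanent cycle of its Bockstein spectral sequence precisely when it comes from a free $\mathbb{Z}$-summand.

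For the torsion statement, I would analyze how the Bockstein differentials interact with the decomposition. The first differential $d_1 = Sq^1$ on $H^*(M_{\mathbb{R}}; \mathbb{Z}_2)$ is \emph{not} simply the direct sum of the internal Bocksteins of the $\widetilde{H}^*(K_{\omega}; \mathbb{Z}_2)$: the ring relation $Sq^1 x_k = x_k^2$, combined with the linear relations $\sum_k a_{ik} x_k = 0$, forces $Sq^1$ on $H^*(M_{\mathbb{R}}; \mathbb{Z}_2)$ to have components that mix summands indexed by different $\omega$, and this cross-summand component is what will ultimately raise the order of $2$-power torsion by one level. Writing down explicit chain-level representatives for $Sq^1$ and its higher Bockstein analogues on the $M_{\mathbb{R}}$ side, I would show that the two mod $2$ classes associated by universal coefficients to a primitive $2^j$-torsion class of $\widetilde{H}^i(K_{\omega}; \mathbb{Z})$ survive $d_1, \ldots, d_j$ but are joined by $d_{j+1}$ in the Bockstein spectral sequence of $M_{\mathbb{R}}$, yielding exactly a primitive $2^{j+1}$-torsion class in $H^{i+1}(M_{\mathbb{R}}; \mathbb{Z})$.

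The hard part is making the last claim precise. Rather than chase the higher differentials $d_r$ one by one, I would filter the integral cochain complex of $M_{\mathbb{R}}$ so that the associated graded pieces are (degree-shifted) copies of the integral cochain complexes of the various $K_{\omega}$, and then work with the spectral sequence of the filtration. The combinatorics of $\lambda_2$ should then imply that a single non-trivial differential exactly doubles the order of every primitive $2$-power torsion, rather than merely preserving it. Verifying that this doubling (and not some other shift) is the correct outcome is the crux of the argument, and is the point at which the detailed combinatorial analysis of \cite{CC16} becomes indispensable.
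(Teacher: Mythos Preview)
The paper does not prove Proposition~\ref{prop:CC}: it is quoted verbatim as \cite[Corollary~5.3]{CC16} and used as a black box in the proof of Theorem~\ref{thm:main}. There is therefore no proof in the paper to compare your proposal against; what you have written is, in effect, an outline of how one might approach the result of \cite{CC16} itself, and you correctly flag at the end that the decisive combinatorial step lives there.

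Your sketch is broadly on the right track---the $\bmod~2$ decomposition of $H^*(M_{\mathbb{R}};\mathbb{Z}_2)$ is indeed the natural $E_1$-page input, and the phenomenon that the Bockstein on the $M_{\mathbb{R}}$ side does not respect the $\omega$-decomposition is the heart of the matter. One logical slip, however: the free-rank statement in part~(1) does \emph{not} follow ``immediately'' from the identification of $E_1$-pages. The free rank is read off from $E_\infty$, not $E_1$, and until you have controlled all higher differentials you do not know that the two sides have the same $E_\infty$-dimension. In your own scheme, part~(1) is actually a consequence of part~(2): once you know that each $d_r$-pair on the $K_\omega$ side corresponds bijectively to a $d_{r+1}$-pair on the $M_{\mathbb{R}}$ side, the total number of classes killed over the course of the spectral sequence is the same on both sides, and only then do the $E_\infty$-ranks (hence the free ranks) agree. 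So the order of your argument should be reversed.
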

\subsection{Proof of Theorem \ref{thm:main}}
By Theorem \ref{thm:BB}, the Witt group $W^*(M)$ of a toric manifold comes from the module $\underline{\mathcal{S}}$. Let
\[
\underline{\mathcal{S}}=
\bigoplus_{i=0}^{\infty}\underline{\mathcal{S}}^{2i}
\]
be graded by cohomology dimensions. We claim that for each $i\geq 0$ there is an isomorphism
\begin{equation}
    \underline{\mathcal{S}}^{2i}=
    \bigoplus_{w\in\mathrm{Row}(\lambda_2)}
    \widetilde{H}^{i-1}(K_{\omega};\mathbb{Z}_2).\label{eq:claim}
\end{equation}
Let $j\geq 1$ be an integer. By Lemma \ref{lem:key}, an element as a $Sq^2$-cocycle in 
$\underline{\mathcal{S}}^{2i}$ corresponds 
bijectively to a $Sq^1$-cocycle in
$H^i(M_{\mathbb{R}};\mathbb{Z}_2)$, and under the mod $2$ reduction, such an element comes from $H^i(M_{\mathbb{R}};\mathbb{Z})$, being either 1) a torsion-free generator,  
2) a primitive $2^{j+1}$-torsion element 
in $H^i(M_{\mathbb{R}};\mathbb{Z})$ 
(after the mod $2$ reduction, a primitive $2^{j+1}$-torsion element in dimension $i$ becomes a pair of $Sq^1$-cocycles in dimensions $i$ and $i-1$, respectively) or 3) a primitive $2^{j+1}$-torsion element in $H^{i+1}(M_{\mathbb{R}};\mathbb{Z})$.

To prove the claim \eqref{eq:claim}, let $G_1'$ be the group on the right-hand side. Notice that an element in $G_1'$ comes from either 1') a torsion-free generator in $G_1^{i-1}$, 2') a primitive $2^j$-torsion element in $G_1^{i-1}$ or 3') a primitive $2^j$-torsion element in $G_1^{i}$. Comparing the two situations above, it follows from Proposition \ref{prop:CC} that we get a bijection between $\underline{\mathcal{S}}^{2i}$ and $G_1'$.

By formulas \eqref{eq:odd}, \eqref{eq:even}, the Witt group $W^{i+1}(M)$ is isomorphic to the torsion part of $KO^{2i+1}(M)$ or $KO^{2i}(M)$, where the latter is isomorphic to $\bigoplus_{k}\underline{\mathcal{S}}^{2i+2+8k}$ by Theorem \ref{thm:BB} and the $8$-periodicity of $KO$-groups. Finally, formula \eqref{eq:Witt} follows from \eqref{eq:claim}, 
and the proof is completed.
\begin{exm}
Let $P$ be the $n$-simplex with characteristic
matrix
\[
 	\lambda=
      	\bordermatrix{%
      	&1,\ldots,n & n+1 \cr
      	& I_n & * \cr
      	}
\]
of size $n\times (n+1)$, in which $I_n$ is 
the identity submatrix of size $n\times n$ and 
all entries in the last column are $-1$.
We have $M=\mathbb{C}P^n$. As the dual of 
the boundary of $P$, $K$ is 
the boundary of an $n$-simplex, thus $K_{\omega}$ 
is contractible except when $n$ is odd and 
$\omega=\sum_{i=1}^nr_i$ with $r_i$ the 
$i$-th row of $\lambda_2$.
We see that in this case $K_{\omega}=K$, and 
\[W^{n}(\mathbb{C}P^n)=
\widetilde{H}^{n-1}(K;\mathbb{Z}_2)=\mathbb{Z}_2 
\]
giving the two $2$-torsion elements 
in $\widetilde{KO}^{2n-1}(\mathbb{C}P^n)$ and $\widetilde{KO}^{2n-2}(\mathbb{C}P^n)$ 
respectively. When $n$ is even, we have 
\[
 W^*(\mathbb{C}P^n)=W^0(\mathbb{C}P^n)=
\widetilde{H}^{-1}(K;\mathbb{Z}_2)=\mathbb{Z}_2,
\]
and $\widetilde{KO}^*(\mathbb{C}P^n)$ is torsion free (see \cite{Fuj67}).
\end{exm}

\section{Toric manifolds of $\mathcal{S}$-type}\label{sec:S}
To prove Theorem \ref{thm:S}, we need a detailed information of the cell 
structure of a small cover $M_\mathbb{R}$. Recall that 
a non-trivial element is primitive if it is not divisible by an integer greater than $1$.
\begin{lem}\label{lem:XY}
Let $X$ be a CW complex with $Y\subset X$ a subcomplex. 
Suppose that each cell in either $Y$ or $X$ is a generator in their mod $2$ 
(co)homology groups, respectively. Then every primitive $2$-torsion element 
in $H^*(Y;\mathbb{Z})$ is the image of a primitive $2$-torsion element in $H^*(X;\mathbb{Z})$ under the map $i^*\co H^*(X;\mathbb{Z})\to H^*(Y;\mathbb{Z})$ induced by the inclusion $i\co Y\to X$.
\end{lem}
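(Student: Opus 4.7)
The plan is to combine the integer Bockstein homomorphism with the observation that the cellular hypothesis forces the restriction map on mod~$2$ cohomology to be surjective, and then deduce the lemma by naturality. First I would unpack the hypothesis: saying that each cell of $Y$ (resp.\ $X$) is a generator of its mod~$2$ (co)homology means that the number of $k$-cells coincides with $\dim_{\mathbb{Z}_2}H^k(-;\mathbb{Z}_2)$ in every degree, which is equivalent to the cellular $\mathbb{Z}_2$-differentials being identically zero in both complexes. Since $Y\subset X$ is a subcomplex, the restriction $C^*(X;\mathbb{Z}_2)\to C^*(Y;\mathbb{Z}_2)$ is surjective on cellular cochains (extend by zero off $Y$), and because the differentials vanish this descends to a surjection $i^*\co H^*(X;\mathbb{Z}_2)\twoheadrightarrow H^*(Y;\mathbb{Z}_2)$.

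Next I would invoke the Bockstein long exact sequence attached to $0\to\mathbb{Z}\xrightarrow{\cdot 2}\mathbb{Z}\to\mathbb{Z}_2\to 0$, which identifies the image of the integer Bockstein $\beta\co H^k(-;\mathbb{Z}_2)\to H^{k+1}(-;\mathbb{Z})$ with the $2$-torsion subgroup of $H^{k+1}(-;\mathbb{Z})$. Given a primitive $2$-torsion class $\alpha\in H^{k+1}(Y;\mathbb{Z})$, write $\alpha=\beta_Y(\bar y)$ for some $\bar y\in H^k(Y;\mathbb{Z}_2)$, use the surjectivity just established to lift $\bar y$ to $\bar{\tilde y}\in H^k(X;\mathbb{Z}_2)$, and set $\tilde\alpha:=\beta_X(\bar{\tilde y})$. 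Naturality of the Bockstein with respect to the inclusion then gives $i^*\tilde\alpha=\beta_Y(i^*\bar{\tilde y})=\beta_Y(\bar y)=\alpha$.

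It remains to check that $\tilde\alpha$ itself is primitive. It lies in the image of $\beta_X$, so $2\tilde\alpha=0$, and it is nonzero because $i^*\tilde\alpha=\alpha\neq 0$; hence $\tilde\alpha$ has order exactly two. If $\tilde\alpha=2\eta$ for some $\eta\in H^{k+1}(X;\mathbb{Z})$, then $\alpha=i^*\tilde\alpha=2(i^*\eta)$, contradicting the primitivity of $\alpha$, so $\tilde\alpha$ is primitive as well. There is no deep obstacle in this argument: the substantive structural point is the first step, where the cellular hypothesis is precisely what makes $i^*$ surjective modulo $2$; once that is in place, naturality of the Bockstein and a short divisibility contradiction handle the rest.
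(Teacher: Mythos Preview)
Your argument is correct and follows essentially the same route as the paper: the paper also observes that the cellular hypothesis makes $i^*$ surjective on mod~$2$ cohomology, then uses naturality of the Bockstein (phrased there via $Sq^1$-connected pairs rather than the integer Bockstein $\beta$) to lift a primitive $2$-torsion class. One small remark: your final divisibility check only rules out $\tilde\alpha=2\eta$, whereas ``primitive'' in the paper means not divisible by \emph{any} integer $>1$; but the identical contradiction (if $\tilde\alpha=k\eta$ then $\alpha=k\,i^*\eta$) handles every $k>1$, so this is a one-word fix.
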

\begin{proof}
Since a cell of $Y$ is also a cell of $X$, by assumption, the inclusion induces a surjection $i^*_2\co H^*(X;\mathbb{Z}_2)\to H^*(Y;\mathbb{Z}_2)$ in mod $2$ cohomology. We see that a primitive $2$-torsion element $\alpha\in H^*(Y;\mathbb{Z})$ corresponds to a pair in $H^*(Y;\mathbb{Z}_2)$ connected by the Bockstein $Sq^1$, which is functorial, thus there is a pair in $H^*(X;\mathbb{Z}_2)$ connected by $Sq^1$, corresponding to a primitive $2$-torsion element $\beta\in H^*(X;\mathbb{Z})$ such that $i^*(\beta)=\alpha$.
\end{proof}

\subsection{Cellular decompositions of small covers}
In this part we recall the necessary parts we need to prove Theorem \ref{thm:S}. All results in this part are essentially known and we refer the readers to \cite{DJ91} for more details.
  
 We consider the mod $2$ reduction of the characteristic matrix \eqref{eq:lambda} as a function $\lambda_2\co \mathcal{F}\to \mathbb{Z}_2^n$. Let $\mathbb{F}^n$ be the vector space $\mathbb{Z}_2^n$. For a subset $I\subset [m]=\{1,\ldots, m\}$, let $\mathbb{F}_I^n$ be the subspace generated by the image of 
 $\{\lambda_2(F_i)\}_{i\in I}$. Then the small cover is given by
\begin{equation}
M_{\mathbb{R}}= \mathbb{F}^n\times P/\sim \label{def:MR}
\end{equation}
where $(g,p)\sim (g',p')$ if and only if $p=p'$ and $g-g'\in \mathbb{F}^n_I$ with $I=\{i\in [m]\mid p\in F_i\}$. Clearly $M_{\mathbb{R}}$ admits a piecewise linear structure induced from $P$. As an additive group, $\mathbb{F}^n$ acts on $M_\mathbb{\mathbb{R}}$ via
\[
        \mathbb{F}^n\times M_{\mathbb{R}}\to M_{\mathbb{R}}
\]
sending $(g',[g,p])$ to $[g'+g,p]$.
     
Now we embed the simple convex $n$-polytope $P$ linearly into the Euclidean space $\mathbb{R}^n$, and let 
     \begin{equation}
f\co\mathbb{R}^n\to\mathbb{R}\label{eq:f}
     	   \end{equation}
be a linear function, being generic in the sense that $f$ is injective on the set of vertices of $P$ (for example, the inner product with a vector  which is not orthogonal to any line in $\mathbb{R}^n$ connecting two vertices of $P$). In this way we have an ordering $v_1<v_2<\ldots<v_l$ of all vertices of $P$, such that $f(v_i)<f(v_j)$ if and only if $v_i<v_j$. 
     
Since $P$ is simple, for each vertex $v_i$ above there are $n$ edges connected to it. Let $E_i$ be the set of these edges, and let $E_i^-$ be the subset of $E_i$ such that $\sup_{x\in e}f(x)\leq f(v_i)$ for all $e\in E_i^-$. If we treat an edge $e\in E_i$ as a vector $\bm{e}$ pointing away from $v_i$ and suppose $\bm{v}_i$ is the position vector of $v_i$, then
\begin{equation}
        D_i=\{\bm{v}_i+\sum_{e\in E_i}t_e\bm{e}
        \mid t_e\in [0,\frac{1}{2}]  \}\label{def:D}
     \end{equation}
gives a closed neighborhood of $v_i$ in $P$. 
     
Let \[\pi\co M_{\mathbb{R}}\to P\] be the orbit map sending $[g,p]$ to $p$. Now we have a filtration 
     \begin{equation}
P_1\subset P_2\subset \cdots \subset P_l=P     
	\label{fil:P}
     \end{equation}
of polyhedra $P_i=\bigcup_{j\leq i} D_j$, giving a filtration 
\begin{equation}
      M_1\subset M_2\subset \cdots \subset M_l=M_{\mathbb{R}}
      \label{fil:M}
\end{equation}
 such that $M_i=\pi^{-1}(P_i)$.
     
Let $I=[0,1]$ be the unit interval. Recall that for $0\leq k\leq n$, an $n$-dimensional $k$-handle ($k$ is called the \emph{index} of the handle) $W$ on a piecewise linear manifold $X$ is a copy (up to piecewise linear homeomorphism) of $I^k\times I^{n-k}$, attached to the boundary $\partial X$ by a piecewise linear embedding 
\[\varphi\co(\partial I^k)\times I^{n-k} \to \partial X. \]
It turns out that the union $X\cup_{\varphi}W$ is again a piecewise linear manifold (see \cite[Chapter 6]{RS72}).
 \begin{prop}\label{prop:handle}
The filtration	\eqref{fil:M} gives a handlebody decomposition of $M_{\mathbb{R}}$. More precisely, the handlebody 
$M_{i}=M_{i-1}\cup\pi^{-1}(D_{i})$ is a piecewise linear manifold (possibly with boundary), where the index of the handle
\[\pi^{-1}(D_{i})=\mathbb{F}^n\times D_i/\sim 
\] 
 is $\mathrm{Card}E_i^-$.
\end{prop}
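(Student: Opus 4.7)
The plan is to analyze $\pi^{-1}(D_i)$ cell-by-cell: first show that it is PL-homeomorphic to an $n$-cube, then identify precisely the portion of its boundary that lies in $M_{i-1}$, and finally exhibit this attaching region in the standard form $\partial I^k \times I^{n-k}$ for $k = \mathrm{Card}\, E_i^-$. Everything is local at the vertex $v_i$, so only the $n$ facets of $P$ meeting there and the $n$ edges $E_i$ enter the discussion.

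First I would fix notation: since $P$ is simple, the facets at $v_i$ may be labelled $F_1, \ldots, F_n$ so that $e_j = \bigcap_{k \neq j} F_k$, and by the characteristic condition the vectors $\lambda_2(F_1), \ldots, \lambda_2(F_n)$ form a basis of $\mathbb{F}^n$. In this basis the gluing subspace $\mathbb{F}^n_{I(t)}$ (with $I(t) = \{k : t_{e_k} = 0\}$) is spanned by the standard basis vectors $\epsilon_k$ with $k \in I(t)$, so the relation $(g, t) \sim (g', t)$ splits as a product of $n$ independent one-dimensional relations, and the map
\[
(g, t) \mapsto \bigl((-1)^{g_1} t_{e_1}, \ldots, (-1)^{g_n} t_{e_n}\bigr)
\]
descends to a PL-homeomorphism $\pi^{-1}(D_i) \cong [-1/2, 1/2]^n$. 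Under this identification the face $\{t_{e_k} = 0\}$ of $D_i$ (which lies on $F_k \subset \partial P$) becomes the interior slice $\{x_k = 0\}$ of the cube, while the face $\{t_{e_k} = 1/2\}$ becomes the pair of boundary faces $\{x_k = \pm 1/2\}$.

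Next I would identify the attaching region. Since $D_j$ and $D_i$ can meet only when $v_i, v_j$ share an edge $e$, and in that case their intersection is precisely the face $\{t_e = 1/2\}$ of $D_i$, the definition of $E_i^-$ yields $D_i \cap P_{i-1} = \bigcup_{e \in E_i^-} \{t_e = 1/2\}$. Pulling back through $\pi$ and translating via the identification above, the part of $\partial \pi^{-1}(D_i)$ attached to $M_{i-1}$ becomes
\[
\bigcup_{j : e_j \in E_i^-} \{x_j = \pm 1/2\} \subset [-1/2, 1/2]^n.
\]
Writing $k = \mathrm{Card}\, E_i^-$ and splitting the cube as $[-1/2, 1/2]^{E_i^-} \times [-1/2, 1/2]^{E_i \setminus E_i^-} = I^k \times I^{n-k}$, this attaching region equals $\partial I^k \times I^{n-k}$, which is exactly the attachment pattern of a PL $k$-handle. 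The conclusion that $M_i$ is a PL manifold (possibly with boundary) then follows from the standard fact that attaching a PL handle to a PL manifold yields a PL manifold, as in \cite[Chapter 6]{RS72}.

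The main obstacle is the bookkeeping in the quotient: the faces $\{t_{e_k} = 0\}$ look like ``boundary'' of $D_i$ because they lie on $\partial P$, but after the $\mathbb{F}^n$-action is divided out they become interior slices of the cube $\pi^{-1}(D_i)$; only the faces $\{t_{e_k} = 1/2\}$, which lie in the interior of $P$ outside the edges, contribute to $\partial \pi^{-1}(D_i)$. Once this ``inversion'' of boundary and interior faces is handled correctly, the split $E_i = E_i^- \sqcup (E_i \setminus E_i^-)$ matches the handle-theoretic splitting $\partial I^k \times I^{n-k} \subset I^k \times I^{n-k}$ immediately, and the handle index reads off as $\mathrm{Card}\, E_i^-$.
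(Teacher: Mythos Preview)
Your proposal is correct and follows essentially the same approach as the paper's own proof: both identify $\pi^{-1}(D_i)$ with an $n$-cube via the locally standard form of $\lambda_2$ at $v_i$, then read off the attaching region as $\partial I^k\times I^{n-k}$ from the faces $\{t_e=1/2\}$ with $e\in E_i^-$, and conclude by the PL handle-attachment result of \cite[Chapter 6]{RS72}. Your version is in fact somewhat more explicit than the paper's, since you write down the homeomorphism $(g,t)\mapsto\bigl((-1)^{g_1}t_{e_1},\ldots,(-1)^{g_n}t_{e_n}\bigr)$ and spell out the ``inversion'' of boundary versus interior faces that the paper leaves to the reader.
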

\begin{proof}
We prove by induction. Consider $M_1=\pi^{-1}(D_1)$, under a change of basis, we may assume that $v_1=F_1\cap F_2\cap\cdots \cap F_n$, and $\lambda(F_i)=e_i$, the $i$-th canonical basis element of $\mathbb{F}^n$, $i=1,\ldots,n$. Using coordinates $(t_e)_{e\in E_{1}}$ in \eqref{def:D} of $D_{1}$, we see that by definition \eqref{def:MR},
 \[
 \pi^{-1}D_1=\mathbb{F}^n\times D_1/\sim
 \]
is piecewise linearly homeomorphic to the cube $(t_e')_{e\in E_{1}}$, where $t_e'\in [-\frac{1}{2},\frac{1}{2}]$. 
     
Now suppose $M_{i-1}$ is a piecewise linear manifold, $2\leq i\leq n$. Let $(t_e)_{e\in E_{i}}$ be the coordinates \eqref{def:D} of $D_{i}$. Since $D_{i}$ is connected to $P_{i-1}$ by edges from $E_{i}^-$, we see that the attaching part $A_{i}$ of $D_{i}$ along $P_{i-1}$ consists of those points with coordinates $(t_e)_{e\in E_{i}}$ such that at least one component $t_e$ is $1/2$ for all $e\in E_{i}^-$. 
     
Since in the neighborhood $D_i$ of $v_i$, the coordinates are also locally standard (the images of the $n$ facets near $v_i$ under $\lambda_2$ span $\mathbb{F}^n$), up to a change of basis we can still use the coordinates of $\pi^{-1}D_1$ above, and it can be checked directly that  
\[\pi^{-1}A_i=\mathbb{F}^n\times A_i/\sim
\] 
being the attaching part of $\pi^{-1}D_i$ along $M_{i-1}$, is in the form $\partial(I^k)\times I^{n-k}$, where $k=\mathrm{Card}E_i^-$. Therefore by induction, $M_i$ is a piecewise linear manifold and the proof is completed. 
\end{proof}
\begin{prop}\label{prop:cell}
Up to homotopy, $M_{\mathbb{R}}$ admits a CW decomposition
\[
M_1\subset M_2 \subset \cdots\subset M_l=M_{\mathbb{R}}
\]
in which $M_i$ is obtained from $M_{i-1}$ by attaching a cell $e_i$ of dimension $k_i=\mathrm{Card}E_i^-$, $i=1,\ldots,l$. If $H^*(M_s;\mathbb{Z})$ has a primitive $2$-torsion element, then so does $H^*(M_{\mathbb{R}};\mathbb{Z})$.
\end{prop}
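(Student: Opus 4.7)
The plan splits into two parts. First, I would upgrade the handlebody filtration of Proposition \ref{prop:handle} to a CW filtration up to homotopy. Each handle $\pi^{-1}(D_i)\cong I^{k_i}\times I^{n-k_i}$ is attached to $M_{i-1}$ along $\partial I^{k_i}\times I^{n-k_i}$ and deformation retracts onto its core disk $I^{k_i}\times\{0\}$, whose attaching map is the restriction of the handle attaching map to $\partial I^{k_i}\times\{0\}$. Since every handle attachment is a cofibration, the gluing lemma applied inductively on $i$ produces a CW complex $M'$ together with a homotopy equivalence $M'\simeq M_{\mathbb{R}}$ sending the subcomplex $M'_s:=e_1\cup\cdots\cup e_s$ onto $M_s$ by a homotopy equivalence, where $e_i$ is a cell of dimension $k_i=\mathrm{Card}E_i^-$.

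The second step is to apply Lemma \ref{lem:XY} to the inclusion $Y=M_s'\subset X=M'$. For this one must verify that each cell of either complex is a generator in mod $2$ cohomology. For $M_{\mathbb{R}}\simeq M'$ this is the Davis--Januszkiewicz computation: the number $h_k$ of handles of index $k$, equivalently, the number of vertices $v$ of $P$ with $\mathrm{Card}E_v^-=k$, equals $\dim_{\mathbb{Z}_2}H^k(M_{\mathbb{R}};\mathbb{Z}_2)$, which forces every mod $2$ cellular boundary operator of $M'$ to vanish. Because $M_s'\subset M'$ is a subcomplex, the cellular boundary operators of $M_s'$ are obtained by restricting those of $M'$ to a subcomplex of chains; they vanish as well, so each cell of $M_s'$ is also a mod $2$ cohomology generator.

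With both hypotheses in place, Lemma \ref{lem:XY} applied to $M_s'\hookrightarrow M'$ asserts that any primitive $2$-torsion class in $H^*(M_s;\mathbb{Z})=H^*(M_s';\mathbb{Z})$ is the image under restriction of a primitive $2$-torsion class in $H^*(M_{\mathbb{R}};\mathbb{Z})=H^*(M';\mathbb{Z})$, which gives the second assertion of the proposition. The main obstacle is the bookkeeping of the first step: one has to ensure that collapsing the handles one by one produces a CW structure that is genuinely compatible with the filtration, so that the CW subcomplex $M_s'$ corresponds to $M_s$ under the same homotopy equivalence. Once this is settled, the Davis--Januszkiewicz rank count supplies the remaining hypothesis of Lemma \ref{lem:XY} and the rest is formal.
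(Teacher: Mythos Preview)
Your proposal is correct and follows essentially the same route as the paper: convert the handle filtration of Proposition~\ref{prop:handle} into a CW filtration by collapsing each handle onto its core, then invoke the Davis--Januszkiewicz rank count \cite[Theorem~3.1]{DJ91} to see that every cell is a mod~$2$ generator, and finish with Lemma~\ref{lem:XY}. The paper states this more tersely, so your extra care in checking that the mod~$2$ cellular differential also vanishes on the subcomplex $M_s'$---and that the homotopy equivalence respects the filtration---is a welcome elaboration rather than a departure.
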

\begin{proof}
It is well known that, up to homotopy, attaching a $k$-handle is equivalent to attaching a $k$-cell. Therefore the first statement follows from Proposition \ref{prop:handle} above. In \cite[Theorem 3.1]{DJ91} it is proved that each $e_i$ is a generator in $H^*(M_{\mathbb{R}};\mathbb{Z}_2)$, therefore the second statement follows from Lemma \ref{lem:XY}.
 \end{proof}

\begin{lem}\label{lem:F}
Let $F$ be any given facet of $P$ with $s$ vertices. Then we can choose a generic linear function $f$ (see \eqref{eq:f}) such that in the ordering $v_1<v_2<\ldots<v_l$ of vertices of $P$ induced from $f$, the first $s$ vertices $v_1,\ldots,v_s$ are that of $F$.
\end{lem}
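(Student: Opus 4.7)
The plan is to take $f$ to be a small perturbation of a linear functional whose minimum on $P$ is attained exactly on the facet $F$. Since $F$ is a facet of the simple convex polytope $P\subset\mathbb{R}^n$, there is a linear functional $f_0\co\mathbb{R}^n\to\mathbb{R}$ and a constant $c\in\mathbb{R}$ such that $f_0(x)\geq c$ on $P$ with equality precisely when $x\in F$. Restricted to the (finite) vertex set of $P$, the values $f_0(v)$ are all equal to $c$ for $v$ a vertex of $F$, and strictly larger otherwise; set
\[
\delta=\min\{f_0(w)-c\mid w\text{ a vertex of }P,\ w\notin F\}>0.
\]

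Next I would pick any linear functional $g\co\mathbb{R}^n\to\mathbb{R}$ that is injective on the vertex set of $P$; such $g$ exist, since the functionals failing to separate some fixed pair of distinct vertices form a hyperplane in $(\mathbb{R}^n)^*$, and there are only finitely many pairs. Let $M=\max_v|g(v)|$ and consider $f=f_0+\epsilon g$ with $\epsilon>0$ to be chosen. For any vertex $v\in F$ and any vertex $w\notin F$, one has
\[
f(w)-f(v)=\bigl(f_0(w)-c\bigr)+\epsilon\bigl(g(w)-g(v)\bigr)\geq \delta-2\epsilon M,
\]
so requiring $\epsilon<\delta/(3M)$ already forces $f(v)<f(w)$ for every such pair. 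Separately, the condition that $f$ fails to be injective on the vertex set amounts to finitely many linear equations in $\epsilon$ of the form $\epsilon\bigl(g(v)-g(w)\bigr)=f_0(w)-f_0(v)$; since $g$ separates vertices, each such equation has a unique solution in $\epsilon$, so only finitely many values of $\epsilon$ are forbidden. Choose $\epsilon\in(0,\delta/(3M))$ avoiding this finite set.

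The resulting $f$ is generic (injective on vertices of $P$) and assigns to every vertex of $F$ a value strictly less than to every vertex outside $F$. Hence in the induced ordering $v_1<v_2<\cdots<v_l$ the first $s$ vertices are exactly those of $F$, as required. There is no deep obstacle in this argument; the only point requiring a moment's care is disentangling the two requirements on $\epsilon$—the open inequality separating $F$ from its complement, and the finite set of exceptional values where genericity fails—but both can be arranged simultaneously because the first requirement cuts out an open interval while the second removes only finitely many points.
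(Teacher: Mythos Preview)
Your proof is correct and follows essentially the same strategy as the paper's: start with a linear functional supporting $P$ along $F$ (the paper realises this concretely by embedding $P$ so that $F$ lies in the hyperplane $\{x_n=0\}$ and taking the $n$-th coordinate), then perturb slightly to achieve genericity while preserving the separation between vertices in $F$ and vertices outside. Your treatment of the genericity condition via a one-parameter perturbation along a pre-chosen generic direction $g$ is a bit more explicit than the paper's, but the idea is the same.
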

\begin{proof}
Suppose $v_0$ is a vertex of $F$. Let $P$ be embedded linearly into $\mathbb{R}^n$ such that:
\begin{enumerate}
\item	$v_0$ coincides with the origin;
\item the $n$ edges connecting to $v_0$ coincide with the $n$ coordinate axes; 
 \item $P$ lies in the first orthant 
\[
\{(x_i)_{i=1}^n\mid x_i\geq 0, i=1,\ldots,n\}
\]  
and $F$ lies in the hyperplane $\{(x_i)_{i=1}^n\mid x_n=0\}$ such that any other vertice of $P$ has the last coordinate $x_n>0$.
\end{enumerate}
Now we choose the $n$-th basis vector $\bm{e}=(0,0,\ldots,0,1)$. The function $f\co \mathbb{R}^n\to\mathbb{R}$ given by the standard inner product with $\bm{e}$ has the property that $f(v)=0$ for all vertices $v$ of $F$, and $f(v')\geq \delta>0$ for all vertices $v'$ not in $F$. We may perturb $\bm{e}$ into the form $\bm{e}_\epsilon=(\varepsilon_1, \varepsilon_2,\ldots,\varepsilon_{n-1},1)$ so that the new function $f_{\varepsilon}$ given by inner product with $\bm{e}_\varepsilon$ is generic, and $f_{\varepsilon}(v)<\frac{\delta}{2}$ (resp. $f_{\varepsilon}(v')>\frac{\delta}{2}$) for all vertices $v$ of $F$ (resp. for all vertices $v'$ not in $F$). Clearly the function $f_{\varepsilon}$ has the desired property.  
\end{proof}

 \begin{prop}\label{prop:F}
Let $v_1<\ldots<v_l$ be an ordering of vertices of $P$ induced by a generic linear function $f$, 
in which $v_1, \ldots, v_s$ are all vertices of a facet $F$. 
Let $M_s$ be the handlebody in the filtration \eqref{fil:M}. Then we have a fiber bundle 
      	\begin{equation} \begin{CD}
 I@>>> M_s@>>>\pi^{-1}F
   	\end{CD} \label{eq:fiber}
        \end{equation}
with $I=[-1,1]$ the closed interval. In particular, $M_s$ is homotopy equivalent to $\pi^{-1}F$. Moreover, $\pi^{-1}F$ is a small cover of dimension $n-1$. 
\end{prop}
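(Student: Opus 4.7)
The plan is to exhibit $M_s$ as an interval bundle over $\pi^{-1}F$ by exploiting the fact that $P_s$ is a closed collar neighborhood of $F$ in $P$ and that the element $\lambda_2(F) \in \mathbb{F}^n$ gives a globally defined ``unfolding direction'' for the double cover of $P_s \setminus F$ in $M_s$.

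First I would verify that $P_s$ is a closed collar of $F$. Since $v_1, \ldots, v_s$ are precisely the vertices of $F$, the intersections $D_j \cap F$ for $j \le s$ together cover $F$, so $F \subset P_s$. By the embedding from Lemma \ref{lem:F}, at each vertex $v_j$ of $F$ exactly one of the $n$ incident edges is transverse to $F$, so each $D_j$ extends a length $\tfrac{1}{2}$ perpendicular to $F$. Piecing the $D_j$ together yields a homeomorphism $P_s \cong F \times [0, \tfrac{1}{2}]$ with $F$ corresponding to $F \times \{0\}$.

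Next I would construct the bundle locally. Consider the involution $\tau_F \co M_s \to M_s$ defined on representatives by $[g, p] \mapsto [g + \lambda_2(F), p]$; this fixes $\pi^{-1}F$ pointwise (because $\lambda_2(F) \in \mathbb{F}^n_I$ whenever $p \in F$) and a direct check using \eqref{def:MR} shows it acts freely on $M_s \setminus \pi^{-1}F$. At each $v_j$ one can choose a basis of $\mathbb{F}^n$ so that $\lambda_2(F) = e_n$ and the other facets through $v_j$ carry characteristics $e_1, \ldots, e_{n-1}$. Using the coordinates $(t_1, \ldots, t_n)$ on $D_j$ from \eqref{def:D}, a class $[g, (t_1, \ldots, t_n)] \in \pi^{-1}(D_j)$ with $t_n > 0$ has exactly two representatives modulo $e_n$, namely $g$ and $g + e_n$. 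Parametrising them by $s = +t_n$ and $s = -t_n$ respectively yields a local trivialization
\[
\pi^{-1}(D_j) \;\cong\; \pi^{-1}(D_j \cap F) \times [-\tfrac{1}{2}, \tfrac{1}{2}],
\]
on which $\tau_F$ acts as $(x, s) \mapsto (x, -s)$.

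These local trivializations agree on the overlaps $D_j \cap D_{j'}$ because the transverse direction is represented by the single global element $\lambda_2(F)$, so the labeling of the two sheets by signs is globally consistent; after rescaling the fiber to $[-1, 1]$ this produces the desired fiber bundle $I \to M_s \to \pi^{-1}F$ with zero section $\pi^{-1}F$, and hence $M_s \simeq \pi^{-1}F$. Finally, the reduced characteristic function $\bar{\lambda}_2(F \cap F_i) := \lambda_2(F_i) \bmod \langle \lambda_2(F) \rangle$ on the simple $(n-1)$-polytope $F$ is non-singular, inherited from that of $\lambda_2$ on $P$, and the quotient $\mathbb{F}^n \to \mathbb{F}^n / \langle \lambda_2(F) \rangle$ induces a homeomorphism between $\pi^{-1}F$ and the small cover of $F$ with characteristic function $\bar{\lambda}_2$, exhibiting $\pi^{-1}F$ as a small cover of dimension $n-1$. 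The main obstacle is ensuring consistency of the local trivializations on overlaps, which reduces to the observation that $\lambda_2(F)$ gives a single globally well-defined transverse direction.
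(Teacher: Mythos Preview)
Your proposal is correct and follows essentially the same line as the paper: exhibit $P_s$ as a collar of $F$, use the single element $\lambda_2(F)$ to ``double'' the collar direction into an interval fiber over $\pi^{-1}F$, and identify $\pi^{-1}F$ with the small cover of $F$ via the quotient $\mathbb{F}^n\to\mathbb{F}^n/\langle\lambda_2(F)\rangle$. The only cosmetic difference is packaging: the paper defines a global bundle projection $\widetilde{\rho}([g,p])=[g,\rho(p)]$ and checks its fibers directly, whereas you build local trivializations over each $D_j$ (organized by the involution $\tau_F$) and then glue; both arguments rest on the same observation that $\lambda_2(F)$ is globally defined.
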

\begin{proof}
Consider the polyhedron $P_s$ in the filtration \eqref{fil:P}. By definition, $F$ is a facet of $P_s$, and $M_s=\pi^{-1}P_s$. Let $F, F_1,\ldots,F_r$ be all facets of $P$ whose intersection with $P_s$ is non-empty. It is easy to see that $f_i=F_i\cap F$, $i=1,\ldots,r$, give all facets of $F$, and $P_s$ is a tubular neighborhood of $F$ in $P$. More precisely, there is a piecewise linear projection $\rho\co P_s\to F$ such that for all $x\in F$, $\rho^{-1}(x)$ is canonically identified with the closed interval $[0,1]$, and 
\begin{equation}
 \rho^{-1}f_i=F_i, \quad i=1,\ldots,r.
 \label{eq:fF}	
 \end{equation}
  
By construction \eqref{def:MR}, we have
\[
 M_s=\pi^{-1}P_s=\mathbb{F}^n\times P_s/\sim,  
 \quad 
 \pi^{-1}F=\mathbb{F}^n\times F/\sim.
\]
Let $\widetilde{\rho}\co M_s\to \pi^{-1}F$ be the map sending $[g,p]$ to $[g,\rho(p)]$. By \eqref{eq:fF}, $\widetilde{\rho}$ is a well-defined surjection, and it can be checked that $\pi^{-1}([g,x])$ is the disjoint union of two pieces of intervals $[g,\rho^{-1}(x)]$ and $[g+\lambda_2(F),\rho^{-1}(x)]$, glued along their common boundary $[g,x]$, therefore $\pi^{-1}([g,x])$ is canonically identified with the interval $I$.  As a conclusion, we have the fiber bundle $\eqref{eq:fiber}$ as desired.
      
It remains to show that $\pi^{-1}F$ is a small cover. Let $\mathbb{F}\cong\mathbb{Z}_2$ be the subspace of $\mathbb{F}^n$ generated by the element $\lambda_2(F)$, and let $\mathbb{F}^{n-1}$ be the quotient $\mathbb{F}^n/\mathbb{F}$. Since every point $[g,p]\in \pi^{-1}F$ is identified with $[g+\lambda_2(F),p]$, we see that
\[
 \pi^{-1}F= \mathbb{F}^n\times F/\sim=
\left(\mathbb{F}\oplus\mathbb{F}^{n-1}\right)\times F/\sim=
\mathbb{F}^{n-1}\times F/\sim', 
\]
where $(g,p)\sim (g',p')$ if and only if $p=p'$ and $g-g'\in \mathbb{F}^{n-1}_I$, where $\mathbb{F}^{n-1}_I$ is generated by $\{\lambda_2(F_i)\mid p\in F_i\}$ in the quotient group $\mathbb{F}^{n-1}$.
\end{proof}
The corollary below follows directly from the proof above.
\begin{cor}\label{cor:M}
Let $F, F_1,\ldots F_r$ be facets of $P$ such that $f_i=F_i\cap F$ give all facets of $F$,$i=1,\ldots,r$. Suppose that when restricted to $\{F_i\}_{i=1}^r$, the characteristic matrix $\lambda_2$ has the following form 
\begin{equation}
\lambda_2|_{P_s}=\bordermatrix{%
 &F & F_1 & F_2 & \cdots & F_r \cr
 &1 &a_{11}& a_{12} & \cdots & a_{1r} \cr
 &0 &a_{21}& a_{22} & \cdots & a_{2r} \cr
 &\cdots &\cdots &\cdots &\cdots &\cdots \cr
 &0 &a_{n1}& a_{n2} & \cdots & a_{nr} \cr
 }\label{M:P}
\end{equation} in which $\lambda_2(F)=e_1$ is the canonical basis element. Then the small cover $\pi^{-1}F$ has the following characteristic matrix
\begin{equation}
\lambda_2|_{F} =\bordermatrix{%
 & f_1 & f_2 & \cdots & f_r \cr
 &a_{21}& a_{22} & \cdots & a_{2r} \cr
 &\cdots &\cdots &\cdots &\cdots \cr
 &a_{n1}& a_{n2} & \cdots & a_{nr} \cr
}.\label{M:f}
\end{equation}
by removing the first row and the first column of \eqref{M:P}. More generally, if $\lambda(F)$ has only one non-zero entry in the $i$-th row, then the characteristic matrix of $\pi^{-1}F$ is obtained by removing the column $\lambda(F)$ together with the $i$-th row. 
\end{cor}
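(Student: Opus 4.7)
The plan is to extract the characteristic matrix of $\pi^{-1}F$ directly from the identification $\pi^{-1}F = \mathbb{F}^{n-1}\times F/\sim'$ that was established at the end of the proof of Proposition \ref{prop:F}, where $\mathbb{F}^{n-1} = \mathbb{F}^n/\langle\lambda_2(F)\rangle$. The equivalence relation $\sim'$ shows precisely that $\pi^{-1}F$ is the small cover over $F$ whose characteristic function $\lambda_2|_F$ sends each facet $f_i = F_i\cap F$ of $F$ to the class $[\lambda_2(F_i)] \in \mathbb{F}^{n-1}$. So the corollary reduces to computing the images of the columns $\lambda_2(F_1),\ldots,\lambda_2(F_r)$ under the quotient map $q \co \mathbb{F}^n \to \mathbb{F}^n/\langle\lambda_2(F)\rangle$.

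For the special case of display \eqref{M:P}, where $\lambda_2(F) = e_1$, the quotient $\mathbb{F}^n/\langle e_1\rangle$ is canonically identified with the span of $e_2,\ldots,e_n$ via the projection that deletes the first coordinate. Applying this projection column by column to \eqref{M:P} and dropping the column $\lambda_2(F)$ itself (which becomes zero and does not index a facet of $F$) yields exactly the matrix \eqref{M:f}.

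For the general case, suppose instead that $\lambda(F)$ has its single non-zero entry in the $i$-th row. Then $\lambda_2(F) = e_i$, so $\mathbb{F}^n/\langle e_i\rangle$ is canonically identified with the span of $\{e_j : j \ne i\}$ by deleting the $i$-th coordinate; equivalently, one may permute the standard basis to bring $e_i$ to $e_1$, apply the previous special case, and permute back. The upshot is that the characteristic matrix of $\pi^{-1}F$ is obtained from $\lambda_2|_{P_s}$ by deleting the column $\lambda(F)$ and the $i$-th row, as claimed.

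There is no substantive obstacle; the argument is a bookkeeping step on top of Proposition \ref{prop:F}. The one point worth a line of justification is that the images $\{[\lambda_2(F_i)]\}$ really do form a valid characteristic function on $F$, but this is automatic: Proposition \ref{prop:F} already asserts that $\pi^{-1}F$ is a small cover of dimension $n-1$, which forces the listed images to satisfy the span condition at each vertex of $F$.
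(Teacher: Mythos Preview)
Your proposal is correct and is exactly the approach the paper intends: the paper gives no separate proof but says the corollary ``follows directly from the proof above,'' and you have spelled out precisely how---namely by reading off the characteristic function of $\pi^{-1}F$ from the quotient description $\pi^{-1}F=\mathbb{F}^{n-1}\times F/\sim'$ with $\mathbb{F}^{n-1}=\mathbb{F}^n/\langle\lambda_2(F)\rangle$ obtained at the end of Proposition~\ref{prop:F}. Your treatment of both the special case $\lambda_2(F)=e_1$ and the general case $\lambda_2(F)=e_i$ matches the paper's claim verbatim.
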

\subsection{Proof of Theorem \ref{thm:S}}
By Lemma \ref{lem:key}, (S1) is equivalent to the condition that
there are no pairs in $H^*(M_{\mathbb{R}};\mathbb{Z}_2)$ connected by $Sq^1$, 
namely there are no primitive $2$-torsion elements in $H^*(M_{\mathbb{R}};\mathbb{Z})$. 
We write the latter condition by (S1'). 
Since (S4) $\Rightarrow$ (S3) is proved in \cite{DJ91} and (S3) $\Rightarrow$ 
(S2) $\Rightarrow$ (S1') is obvious, we only need to prove (S1') $\Rightarrow$ (S4). 
  
The proof is an induction on the dimension $n$. A $1$-dimensional small cover is a circle, and (S4) holds trivially. When $n=2$, (S1') implies that $M_{\mathbb{R}}$ is orientable, and (S4) follows from the classification of small covers in dimension $2$ (see \cite [p. 427, Example 1.20]{DJ91}).
  
Consider $n=3$. Let $v,v'$ be two vertices connected by an edge $e$. 
Up to a basis change of $\mathbb{F}^n$ and relabel of facets, suppose $v=F\cap F_1\cap F_2$, 
$v'=F\cap F_1\cap F_3$, $e=F\cap F_1$ and $\lambda(F),\lambda(F_1),\lambda(F_2)$ form the 
standard basis of $\mathbb{F}^3$. 
Observe that by the non-degeneracy of $\lambda_2$, near $e$ it has the following form
\begin{equation} 
\bordermatrix{%
 &F & F_1 & F_2 &  F_3 \cr
 &1 & 0 &  0 & a_{13} \cr
 &0 & 1  & 0 & a_{23} \cr
 &0 &0& 1 &  1  \cr
 }.\label{eq:standard}
\end{equation}
We will show that $a_{13}=a_{23}=0$.  
  
By Lemma \ref{lem:F}, starting from the facet $F$, we get a filtration 
\[
   M_1\subset M_2\subset \cdots \subset M_s\subset \cdots\subset 
   M_l=M_{\mathbb{R}}
  \] 
as a handle decomposition of $M_{\mathbb{R}}$, in which $M_s$ is homotopy equivalent to the $2$-dimensional small cover $\pi^{-1}F$. We see that when restricted to $\{F_i\}_{i=1}^r$, $\lambda_2|_{P_s}$ has the form \eqref{M:P} and $\pi^{-1}F$ has its characteristic matrix in the form \eqref{M:f}. Notice that 
\[
H^*(\pi^{-1}F;\mathbb{Z})= H^*(M_s;\mathbb{Z})
\] 
does not have primitive $2$-torsion elements, otherwise so would $H^*(M_{\mathbb{R}};\mathbb{Z})$ by Proposition \ref{prop:cell}, a contradiction to (S1'). As a conclusion, by induction hypothesis, we have $a_{23}=0$ and
\[
 \lambda_2|_F=
 \bordermatrix{%
 & f_1 & f_2 & f_3 &\cdots & f_{r-1}&f_r \cr
 & 1 &   0 &  0 &\cdots   &    *   &* \cr
 & 0 &   1 &  1 &\cdots &    *  &* \cr
 }
\]
where there is only one non-zero entry in each column. Therefore by Corollary \ref{cor:M},
\[
\lambda_2|_{P_s}=
\bordermatrix{%
&F & F_1 & F_2 &  F_3&\cdots     &F_r \cr
&1 & 0 &  0 & a_{13}&\cdots & a_{1r} \cr
&0 & 1  & 0 & 0&\cdots &   * \cr
&0 &0& 1 &  1&\cdots & * \cr
}.
\]
It remains to show that $a_{13}=0$. Now replacing $F$ by $F_1$, we use Lemma \ref{lem:F} again to get a filtration (i.e., a different cell decomposition of $M_{\mathbb{R}}$)
\[
M_{1}'\subset M_2'\subset\cdots M_{s_1}'\subset
\cdots\subset M_{l}'=M_{\mathbb{R}}
\]
with $M_{s_1}'=\pi^{-1}P_{s_1}'$ homotopy equivalent to the $2$-dimensional small cover $\pi^{-1}F_1$, which has no primitive $2$-torsion elements by Proposition \ref{prop:cell}.  By Corollary \ref{cor:M}, the characteristic matrix of $\pi^{-1}F_1$ is in the form
\[
\lambda_{F_1}=\bordermatrix{%
& f' & f_2' & f_3' &\cdots &  \cr
& 1 &   0 &  a_{13} &\cdots   &  \cr
& 0 &   1 &  1 &\cdots &    \cr
}
\]  
where $f'=F\cap F_1$, $f_2'=F_2\cap F_1$ and $f_3'=F_3\cap F_1$, thus we get $a_{13}=0$ by induction hypothesis.

As a conclusion, for any pair $v,v'$ of vertices connected by an edge $e$, in the $4$ columns of $\lambda_2$ near $e$, if the three of them corresponding to $v$ have only one non-zero entry, so does the $4$-th one. In this way we start from a chosen vertex $v$ (where $\lambda_2$ is standard) and run through all other vertices connected to $v$ by edges, we see that (S4) holds when $n=3$.

In a higher dimension $n$ the proof is similar. Given any two vertices $v,v'$ of $P$ connected by an edge $e=F\cap F_1\cap\ldots \cap F_{n-2}$, where $v=F\cap F_1\cap\ldots\cap F_{n-1}$ and  $v'=F\cap F_1\cap\ldots \cap F_{n-2}\cap F_n$, we assume that near $e$, $\lambda_2$ is in the form
\begin{equation} 
\bordermatrix{%
&F & F_1  &\cdots &F_{n-2} & F_{n-1} & F_n \cr
&1 & 0  & \cdots &  0      &0        & a_{n1}\cr
&0 & 1  &  \cdots & 0      &0        &  *\cr
&0 & 0  & \cdots & 0 & 0 &  * \cr
&\cdots  &\cdots &\cdots &\cdots &\cdots &\cdots\cr
&0 &0&   \cdots & 1 & 0 & *\cr
&0 &0 &  \cdots & 0 & 1 & 1\cr
}.\label{eq:higher}
\end{equation} 
By induction hypothesis and repeating the argument above, we can show that the last column has only one non-zero entry. Therefore in the $n+1$ columns of $\lambda_2$ associated to any pair of vertices connected by an edge, if $n$ of them corresponding to one of the vertices have only one non-zero entry, so does the last column. Since all vertices are connected by edges, it follows that near all edges $\lambda_2$ has only one non-zero entry in each column, namely (S4) holds. The proof is completed.

\section{Toric manifolds of $\mathcal{M}$-type}\label{sec:M}
\begin{proof}[Proof of Theorem \ref{thm:M}]
The equivalence of (M1) and (M2) is given in 
Theorem \ref{thm:BB}. By Lemma \ref{lem:key},  
(M1) is equivalent to the condition that, the $Sq^1$-cohomology of $\widetilde{H}^*(M_{\mathbb{R}};\mathbb{Z}_2)$ is trivial, which is (M3). The equivalence of (M2) and (M4) is given by Theorem \ref{thm:main}.
\end{proof}

The remaining part of this section is devoted to a proof of Theorem \ref{thm:J}.  
\subsection{Toric manifolds from simplicial 
wedge constructions \cite{BBCGwedge}}
Let $K\subset 2^V$ be an abstract simplicial complex with vertex set $V$. For a simplex $\sigma\in K$, the \emph{link} of $\sigma$ is the subcomplex 
\[
\mathrm{Link}_K\sigma=\{\tau\in K\mid 
\sigma\cup\tau\in K, \ 
\sigma\cap\tau=\emptyset\}
\]
and the \emph{join} of two simplicial complexes $K_1$, $K_2$ with disjoint vertex sets $V_1$, $V_2$ 
respectively, is given by
\[
K_1*K_1=\{\sigma\subset V_1\cup V_2\mid 
\sigma=\sigma_1\cup\sigma_2, \ \sigma_1\in K_1, 
\ \sigma_2\in K_2
\}.
\] 
For a subset $S\subset V$, let $K_S$ be the \emph{full subcomplex} 
\begin{equation}
K_S=\{\sigma\subset S\mid \sigma \in K\}.
\label{def:KS}
\end{equation}
Suppose the vertex set of $K$ is $V=\{v_i\}_{i=1}^m$. The \emph{simplicial wedge construction} on the $j$-th vertex $v_j$ is a a new simplicial complex $K(v_j)$ with vertex set $\{v_i\}_{i=1}^m\cup\{v_j'\}$, which is given by 
\begin{equation}
	 K(v_j)=\{v_j,v_j'\}*\mathrm{Link}_K(v_j)\cup
	 \{v_j\}*K_{V\setminus \{v_j\}}\cup
	 \{v_j'\}*K_{V\setminus \{v_j\}.}
	 \label{def:Kv}
\end{equation}
For a simple convex polytope $P$ with facet set $\{F_i\}_{i=1}^m$, let $K_P\subset 2^{V}$ be the dual to the boundary of $P$, namely $V=\{v_i\}_{i=1}^m$ and $\{v_{i_t}\}_t\in K$ if and only if $\cap_t F_{i_t}$ is non-empty. 
	
Let $M$ be a toric $2n$-manifold over $P$ with characteristic matrix \eqref{eq:lambda}. It is proved in \cite{BBCGwedge} that there is a new toric manifold $M(v_j)$ of dimension $2n+2$ over a simple convex polytope $P(v_j)$ with its boundary dual to $K_P(v_j)$ (see \cite{GLdM13} for an explicit construction from $P$ to $P(v_j)$), and the characteristic matrix of $M(v_j)$ is
\begin{equation}
 \lambda(v_j)=\bordermatrix{%
 &v_1 & v_2  & \cdots & v_{j-1} & v_j & 
 v_{j+1} & \cdots & v_m & v_j' \cr
  &a_{11}& a_{12} & \cdots & 
 a_{1,j-1} & a_{1j} & a_{1,j+1} 
  &\cdots & a_{1m} & 0 \cr
  &a_{21}& a_{22} & \cdots & 
   a_{2,j-1} & a_{2j} & a_{2,j+1} 
   &\cdots & a_{2m} & 0 \cr
   &\cdots & \cdots & \cdots & 
   \cdots & \cdots & \cdots 
   &\cdots & \cdots & \cdots \cr 
   &a_{n1}& a_{n2} & \cdots & 
   a_{n,j-1} & a_{nj} & a_{n,j+1} 
   &\cdots & a_{nm} & 0 \cr
   & 0& 0 & \cdots & 
   0 & -1 & 0 
   &\cdots & 0 & 1 \cr
  },\label{M:P+}
\end{equation}
which is obtained from $\lambda$ by adding the last row and the last column with non-zero entries $\pm 1$.  
\subsection{Proof of Theorem \ref{thm:J}}
Suppose $v_1,\ldots,v_n$ span a simplex in $K_P$. Let $M(v_1,\ldots,v_n)$ (resp. $M_\mathbb{R}(v_1,\ldots,v_n)$) be the toric manifold (resp. small cover) obtained by consecutive simplicial wedge constructions from $M$ (resp. $M_{\mathbb{R}}$). After a basis change and the mod $2$ reduction, 
the characteristic matrix $\lambda_2(v_1,\ldots,v_n)$ of $M_{\mathbb{R}}(v_1,\ldots,v_n)$ 
is in the form
\begin{equation}
 \lambda_2(v_1,\ldots,v_n)=
 \bordermatrix{%
  &v_1,\ldots,v_n & v_{n+1},\ldots,
  v_m & v_1'\ldots,v_n' \cr
  & I_n & * & \bm{0}\cr
  & I_n & \bm{0} & I_n \cr
 }\label{M:P++}
\end{equation}
with size $2n\times (m+n)$, 
in which $I_n$ the identity matrix of size $n\times n$.  
      	
Let $\omega'=(w'_i)_{i=1}^{m+n}$ be a non-zero vector from the row space $\mathrm{Row}(\lambda_2(v_1,\ldots,v_n))$ spanned by the rows $r_1,\ldots, r_{2n}$ of $\lambda_2(v_1,\ldots,v_n)$, and let $K_{\omega'}'$ be the full subcomplex of $K'=K(v_1,\ldots,v_n)$. By definition, the vertex set of $K'_{\omega'}$ is 
\[
\{v_i\mid w_i'=1,\ 1\leq i\leq m\}\cup\{v_j'\mid w_j'=1,\ m+1\leq j\leq m+n \}.
\]
By Theorem \ref{thm:M}, we only need to show that $K'_{\omega'}$ is contractible.

By construction \eqref{def:Kv}, for the $n$ pairs of vertices $\{v_i,v_i'\}_{i=1}^n$, if the full subcomplex $K_{\omega'}'$ contains only one vertex $v\in \{v_i,v_i'\}$ but does not contain the whole edge $\{v_i,v_i'\}$, then it is a cone with apex $v$, which is contractible. If we write $\omega'=\omega_{\alpha}'+\omega_{\beta}'$, where $\omega_{\alpha}'$ (resp. $\omega_{\beta}'$) is a linear sum of $r_1,\ldots,r_n$ (resp. $r_{n+1}, \ldots, r_{2n}$), we see that this is the case, except when $\omega_{\alpha}'$ is zero. However, in the last case, $K_{\omega'}'$ is the simplex spanned by $\{v_i,v_i'\mid w_i'=1\}$, which is clearly contractible.
\begin{ack}
The authors would like to thank Tony Bahri for the introduction and discussions on the $KO$-theory of toric manifolds, and thank Anton Ayzenberg for the discussions on torsion-free small covers.
L. Cai was supported by National Natural Science Foundation of China 
(grant no. 11801457). S. Choi was supported by the National Research Foundation 
of Korea Grant funded by the Korean Government (NRF-2019R1A2C2010989). H. Park was 
supported by the National Research Foundation of Korea Grant funded by the Korean 
Government (NRF-2019R1G1A1007862).
\end{ack}

\end{document}